\DeclareMathOperator{\Supp }{supp}
\DeclareMathOperator{\Lin }{lin} %linear error
\DeclareMathOperator{\Cor }{cor} %correction error
\DeclareMathOperator{\Osc }{osc} %osc error
\DeclareMathOperator{\Acc}{acc}
\DeclareMathOperator{\D}{div}
\newtheorem{theorem}{Theorem}[section]
\newtheorem{lemma}[theorem]{Lemma}
\newtheorem{proposition}[theorem]{Proposition}
\newtheorem{remark}[theorem]{Remark}
\def \TT  {\mathbb{T}} %torus
\def \RR {\mathbb{R}}  %real numbers
\def \NN {\mathbb{N}}  %natural numbers
\def \ZZ {\mathbb{Z}}  %integer numbers
\def \ep {\varepsilon}
\def \l {\lambda}
\def \ek {\mathbf{e}_{k}}
\def \bp {\mathbf{ \Phi}}
\def \bw {\mathbf{ W}}
\def \bwk {\mathbf{ W}_k}
\def \ep {\varepsilon}
\def \p {\partial}
\newcommand{\comment}[1]{}
\numberwithin{equation}{section}
\begin{document}

\title[Almost smooth nonuniqueness]{Extreme temporal intermittency in the linear Sobolev transport: almost smooth nonunique solutions}

%\title[Almost smooth nonuniqueness]{Temporal intermittency in the transport equation I: almost smooth nonunique solutions}

%\title[Almost smooth nonuniqueness]{Wild temporal oscillations for the transport equation I: almost smooth nonunique solutions}

%\title[almost bounded nonrenormalized solutions]{Almost bounded nonrenormalized solutions for the transport equation}

\author{Alexey Cheskidov}
\address[Alexey Cheskidov]{Department of Mathematics, Statistics and Computer Science,
University of Illinois At Chicago, Chicago, Illinois 60607 and School of Mathematics, Institute for Advanced Study, 1 Einstein Dr., Princeton, NJ 08540, USA.}

\email{acheskid@uic.edu}

\author{Xiaoyutao Luo}

\address[Xiaoyutao Luo]{Department of Mathematics, Duke University, Durham, NC 27708 and School of Mathematics, Institute for Advanced Study, 1 Einstein Dr., Princeton, NJ 08540, USA.}

\email{xiaoyutao.luo@duke.edu}

% \thanks{AC was partially supported by the NSF grants DMS--1909849}

%    General info
\subjclass[2010]{35A02, 35D30 , 35Q35}

\date{\today}

\begin{abstract}
In this paper, we revisit the notion of temporal intermittency to obtain sharp nonuniqueness results for linear transport equations. We construct divergence-free vector fields with sharp Sobolev regularity $L^1_t W^{1,p}$ for all $p<\infty$ in space dimensions $d\geq 2$ whose transport equations admit nonunique weak solutions belonging to $L^p_tC^k$ for all $p<\infty$ and $k\in \NN$. In particular, our result shows that the time-integrability assumption in the uniqueness of the DiPerna-Lions theory is sharp. The same result also holds for transport-diffusion equations with diffusion operators of arbitrarily large order in any dimensions $d \geq 2$.
 
\end{abstract}

\keywords{Transport equation, Nonuniquness, Convex integration}

\maketitle

%%%%%%%%%%%%%%%%%%%%%%%%%%%%%%%%%%%%%%%%%%%%%%%%%%%%%%%%%%%%%%%%%%%%%%%%%
\section{Introduction}

We consider the linear transport equation on the torus $\TT^d$ with $d\geq 2$,
\begin{equation}\label{eq:the_equation}
\begin{cases}
\partial_t \rho + u \cdot \nabla  \rho  =0 &\\
\rho|_{t=0} =\rho_0,
\end{cases}
\end{equation}
where $\rho : [0,T] \times \TT^d  \to \RR$ is a scalar density function and $u : [0,T] \times \TT^d  \to \RR^d$ is a given incompressible vector field, i.e. $ \D u =0 $. The linearity of the equation allows to prove the existence of weak solutions, even for very rough vector fields, that  satisfy the equation in the sense of distributions 
\begin{equation}\label{eq:def_weak_solutions}
\int_{\TT^d}\rho_0  \varphi(0,\cdot) \,dx = \int_0^T \int_{\TT^d}\rho (\p_t \varphi + u \cdot \nabla \varphi   ) \,dx dt   \quad \text{for all $\varphi \in C^\infty_c ([0,T) \times \TT^d)$} .
\end{equation}

In this paper, we focus on the issue of the uniqueness/nonuniqueness of weak solutions \eqref{eq:def_weak_solutions} for vector fields with Sobolev regularity. The celebrated DiPerna-Lions theory provides natural criteria for the uniqueness of the weak solutions for Sobolev vector fields:

\begin{theorem}[DiPerna-Lions~\cite{MR1022305}]
Let $p, q \in [1,\infty]$ and let $u \in L^1(0,T; W^{1, q} (\TT^d)) $ be a divergence-free vector field. For any $\rho_0 \in L^p(\TT^d)$, there exists a unique renormalized solution $\rho \in C( [0,T]; L^{p}(\TT^d) $  to \eqref{eq:the_equation}. Moreover, if
\begin{equation}\label{eq:DL_condition}
\frac{1}{p}  + \frac{1}{q} \leq 1,
\end{equation}
then this solution $\rho$ is unique among all weak solutions in the class $ L^\infty (0,T; L^p(\TT^d)$.
\end{theorem}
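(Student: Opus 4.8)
The plan is to run the classical DiPerna--Lions scheme, whose engine is a commutator estimate. \emph{Existence.} Suppose first $\rho_0\in L^\infty$. With $\eta_\epsilon$ a standard mollifier, put $u_\epsilon=u*\eta_\epsilon$, $\rho_0^\epsilon=\rho_0*\eta_\epsilon$, and solve $\partial_t\rho^\epsilon+u_\epsilon\cdot\nabla\rho^\epsilon=0$ along the characteristics of $u_\epsilon$; since $\D u_\epsilon=0$ the flow preserves Lebesgue measure, so $\|\rho^\epsilon(t)\|_{L^r}\le\|\rho_0\|_{L^r}$ for every $r$ and $\int_{\TT^d}\beta(\rho^\epsilon(t))\,dx=\int_{\TT^d}\beta(\rho_0^\epsilon)\,dx$ for $\beta\in C^1_b$. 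A weak-$*$ limit $\rho$ is a bounded weak solution, which by the renormalization step below is renormalized; the resulting identity $\int\beta(\rho(t))=\int\beta(\rho_0)$ together with the distributional time-continuity read off the equation gives $\rho\in C([0,T];L^r)$ for all $r<\infty$. A general $\rho_0\in L^p$ is reached by density: choosing $\rho_0^n\in L^\infty$ with $\rho_0^n\to\rho_0$ in $L^p$ and letting $\rho^n$ be a bounded weak solution with datum $\rho_0^n$, the difference $\rho^n-\rho^m$ is a bounded weak solution, hence renormalized, so $\int\beta_M(\rho^n-\rho^m)(t)=\int\beta_M(\rho_0^n-\rho_0^m)$ for smooth truncations $\beta_M$ of $|s|^p$; letting $M\to\infty$ yields $\|\rho^n(t)-\rho^m(t)\|_{L^p}\le\|\rho_0^n-\rho_0^m\|_{L^p}$, so $(\rho^n)$ is Cauchy in $C([0,T];L^p)$ and its limit is the sought renormalized solution.

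\emph{The commutator lemma (the crux).} For $\rho\in L^p(\TT^d)$ and $u\in W^{1,q}(\TT^d)$ with $\D u=0$, set $r_\epsilon:=u\cdot\nabla(\rho*\eta_\epsilon)-(u\cdot\nabla\rho)*\eta_\epsilon$. Writing $u\cdot\nabla\rho=\D(\rho u)$ and rescaling $y=\epsilon z$ one gets
\[
r_\epsilon(x)=\int_{\RR^d}\frac{u(x)-u(x-\epsilon z)}{\epsilon}\cdot(\nabla\eta)(z)\,\rho(x-\epsilon z)\,dz .
\]
The difference quotient is bounded in $L^q$ by $|z|\,\|\nabla u\|_{L^q}$ and converges there to $(z\cdot\nabla)u$, while $\rho(\cdot-\epsilon z)\to\rho$ in $L^p$; since \eqref{eq:DL_condition} is exactly the embedding $L^p(\TT^d)\hookrightarrow L^{q'}(\TT^d)$, H\"older's inequality gives the uniform bound $\|r_\epsilon\|_{L^1}\lesssim\|\nabla u\|_{L^q}\|\rho\|_{L^p}$ and identifies the limit as $\rho\sum_{i,j}\partial_ju_i\!\int z_j\partial_i\eta\,dz=-\rho\,\D u=0$. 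Hence $r_\epsilon\to0$ in $L^1$; restoring the time variable, $r_\epsilon\to0$ in $L^1((0,T)\times\TT^d)$ whenever $\rho\in L^\infty_tL^p$, $u\in L^1_tW^{1,q}$ and $\tfrac1p+\tfrac1q\le1$ --- and \emph{unconditionally} when $p=\infty$, since then $q'=1$ and $L^\infty\hookrightarrow L^1$ for free.

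\emph{Renormalization and uniqueness.} Let $\tilde\rho$ be a weak solution, bounded in the unconditional case or in $L^\infty_tL^p$ under \eqref{eq:DL_condition}. Mollifying in space gives $\partial_t\tilde\rho_\epsilon+u\cdot\nabla\tilde\rho_\epsilon=r_\epsilon$; as $\tilde\rho_\epsilon$ is smooth in $x$ with $\partial_t\tilde\rho_\epsilon\in L^1_{t,x}$, the classical chain rule gives $\partial_t\beta(\tilde\rho_\epsilon)+u\cdot\nabla\beta(\tilde\rho_\epsilon)=\beta'(\tilde\rho_\epsilon)r_\epsilon$, and sending $\epsilon\to0$ for $\beta\in C^1_b$ (using $r_\epsilon\to0$ in $L^1$ and the boundedness of $\beta,\beta'$) shows $\tilde\rho$ is renormalized; integrating the renormalized equation in $x$ and tracking the datum yields $\int\beta(\tilde\rho(t))=\int\beta(\rho_0)$ for all $\beta\in C^1_b$. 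In particular, under \eqref{eq:DL_condition} every weak solution in $L^\infty_tL^p$ is renormalized, so it remains to see that two renormalized solutions $\rho_1,\rho_2$ with a common datum coincide: for each $R>0$ the truncation $\beta_R(\rho_i)$ (with $\beta_R\in C^1_b$, $\beta_R=\mathrm{id}$ on $[-R,R]$) is a \emph{bounded} weak solution, hence renormalized by the $p=\infty$ case, so $\beta_R(\rho_1)-\beta_R(\rho_2)$ is a bounded renormalized solution vanishing at $t=0$; picking $\beta\in C^1_b$ with $\beta(0)=0$ and $\beta>0$ off the origin forces $\int\beta(\beta_R(\rho_1)-\beta_R(\rho_2))(t)=0$, hence $\beta_R(\rho_1)=\beta_R(\rho_2)$ for every $R$, i.e.\ $\rho_1=\rho_2$. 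Finally, such a solution lies in $C([0,T];L^p)$ because the renormalization identity with $\beta_M\nearrow|s|^p$ forces $\|\tilde\rho(t)\|_{L^p}=\|\rho_0\|_{L^p}$, which upgrades the weak time-continuity coming from the equation to strong continuity.

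\emph{Main obstacle.} The only genuinely delicate ingredient is the commutator lemma: passing to the limit in $r_\epsilon$ requires simultaneously the $L^q$-convergence of the difference quotients of $u$, the $L^p$-continuity of the translates of $\rho$, and --- crucially --- the embedding $L^p\hookrightarrow L^{q'}$, that is, exactly the hypothesis $\tfrac1p+\tfrac1q\le1$; for $p=\infty$ this is automatic, which is precisely why the bounded theory, and with it the $C_tL^p$ renormalized solution, needs no extra assumption. Everything else --- the characteristic ODE and measure invariance, weak-$*$ compactness, the chain rule for the mollified equation, and the truncation bookkeeping --- is routine.
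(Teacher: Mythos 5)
This theorem is quoted verbatim from DiPerna--Lions \cite{MR1022305} as background; the paper supplies no proof of it, so there is nothing internal to compare against. Your argument is precisely the classical DiPerna--Lions scheme --- the commutator identity $r_\epsilon(x)=\int\rho(x-\epsilon z)\,\epsilon^{-1}\big(u(x)-u(x-\epsilon z)\big)\cdot\nabla\eta(z)\,dz$ with the embedding $L^p\hookrightarrow L^{q'}$ encoding \eqref{eq:DL_condition}, renormalization via mollification and the chain rule, and uniqueness by reducing to the bounded case through truncations --- and it is correct; the only caveat worth flagging is that for $p=\infty$ the asserted time-continuity must be read in the weak-$*$ sense (strong continuity holds in $L^r$ for every $r<\infty$), which is also how the original reference states it.
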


In recent years, there has been a growing interest~\cite{MR3884855,1902.08521,2003.00539,2004.09538} in showing the (possible) sharpness of the DiPerna-Lions condition \eqref{eq:DL_condition}, but so far the nonuniqueness constructions have not reached the full complement of \eqref{eq:DL_condition} in the class of $L^\infty_t L^p$ solutions. In this paper, we show that the time-integrability assumption in the DiPerna-Lions uniqueness theorem is sharp. More precisely,  
\begin{theorem}\label{thm:main_thm_short}
For any dimension $d\geq 2$ there exists a velocity vector field  $u\in L^1(0,T;W^{1,p}(\TT^d))$ for all $p<\infty$, such that the the uniqueness of \eqref{eq:the_equation} fails in the class 
$$ 
\rho \in \bigcap_{\substack{
  p<\infty  \\ k\in \NN}} L^p(0,T; C^k(\TT^d)).
$$
\end{theorem}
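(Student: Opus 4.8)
The strategy is a convex‑integration scheme run in "time intermittency" mode. The plan is to build, inductively, a sequence of smooth divergence‑free fields $u_q$ and smooth densities $\rho_q$ solving the transport equation with an error term,
\begin{equation*}
\partial_t \rho_q + u_q\cdot\nabla\rho_q = \Div R_q,
\end{equation*}
where $R_q \to 0$ in an appropriate norm, while $u_q \to u$ in $L^1_t W^{1,p}$ for every finite $p$ and $\rho_q \to \rho$ in $L^p_t C^k$ for every finite $p$ and every $k$. The nonuniqueness is then extracted by arranging two different limiting solutions sharing the same initial datum — the standard trick being to make $\rho_q$ at $t=0$ (and on a small time interval near $t=0$) equal to a fixed datum while $\rho_q$ is nontrivial later, so the limit is distinct from the trivial/DiPerna–Lions solution. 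First I would fix the building blocks: Mikado‑type or Yin–Yang shear flows on $\TT^d$ with a small set of active frequencies, modulated by sharply concentrated temporal cutoffs $g_q(t)$ whose $L^1_t$ norm is tiny but whose $L^\infty_t$ (and higher $L^r_t$) norms are allowed to blow up. The key asymmetry that makes the theorem work: the velocity is only measured in $L^1_t$, so we may pay an arbitrarily large temporal factor on $u_q$, whereas the density is measured in all $L^p_t$ with $p<\infty$, so its temporal profile must stay bounded in every such norm — which is exactly what an extremely intermittent (spike‑like) time cutoff with controlled $L^p_t$ norms for all finite $p$ provides.

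**The inductive step.** The heart of the construction is the perturbation $\rho_{q+1} = \rho_q + \vartheta_{q+1}$, $u_{q+1} = u_q + w_{q+1}$, chosen so that the principal part of the new density perturbation oscillates against the new velocity perturbation to cancel the old error $\Div R_q$: schematically $w_{q+1}\cdot\nabla$ acting on the slow part, paired with the fast oscillation of $\vartheta_{q+1}$, reproduces $-\Div R_q$ up to lower‑order terms. I would use a spatial parameter $\lambda_{q+1}$ (frequency of oscillation) and an amplitude determined by $\|R_q\|^{1/2}$ in the usual convex‑integration fashion, but crucially the temporal concentration parameter — call it $\mu_{q+1}$, with the profile being $\sim \mu_{q+1}^{1/p}$‑normalized in $L^p_t$ — is tuned so that: (i) $\|w_{q+1}\|_{L^1_t W^{1,p}_x}$ is small (this is where a large $\mu_{q+1}$ hurts via the gradient but is beaten by a correspondingly large $\lambda_{q+1}$ and the $L^1_t$ smallness of the temporal bump); (ii) $\|\vartheta_{q+1}\|_{L^p_t C^k_x}$ is small for the chosen finite $p$ and integer $k$ at stage $q$ — which forces the temporal profile of $\vartheta_{q+1}$ to be only \emph{mildly} concentrated (bounded in $L^p_t$), while the temporal profile of $w_{q+1}$ can be wildly concentrated. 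Reconciling (i) and (ii) with the quadratic cancellation of $R_q$ is the arithmetic core: one checks that the new error $R_{q+1}$, built from the high–high interaction remainder, the transport error, and the divergence‑corrector error, is smaller than $R_q$ by a fixed factor once $\lambda_{q+1} \gg \lambda_q$, $\mu_{q+1}$ are chosen in the right hierarchy.

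**Passing to the limit and getting nonuniqueness.** With summable bounds $\sum_q \|w_{q+1}\|_{L^1_t W^{1,p}_x} < \infty$ for each $p<\infty$ and $\sum_q \|\vartheta_{q+1}\|_{L^p_t C^k_x} < \infty$ for each $(p,k)$, the fields $u_q$, $\rho_q$ converge to limits $u$, $\rho$ in the stated spaces, and $R_q \to 0$, so $(u,\rho)$ is a genuine weak solution in the sense of \eqref{eq:def_weak_solutions}. To produce \emph{nonuniqueness} I would, as is standard, arrange the construction so that $\rho_0 \equiv 0$ (or some fixed datum) is preserved — e.g. by localizing all temporal bumps away from $t=0$, or by starting from $\rho_0 = \rho_{\mathrm{start}}$ and ensuring each correction vanishes near $t=0$ — while $\rho \not\equiv$ the DiPerna–Lions solution with that datum. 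Concretely, since $u \in L^1_t W^{1,p}$ with $p$ arbitrarily large is \emph{not} in $L^1_t W^{1,\infty}$, the DiPerna–Lions/Ambrosio uniqueness regime does not apply, so the smooth‑looking solution $\rho \in \bigcap_{p<\infty,k\in\NN} L^p_t C^k$ that we build with zero initial data coexists with $\rho \equiv 0$, giving the claimed failure of uniqueness.

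**Main obstacle.** The principal difficulty is the \emph{simultaneous} smallness of $w_{q+1}$ in $L^1_t W^{1,p}_x$ and of $\vartheta_{q+1}$ in \emph{every} $L^p_t C^k_x$ with $p$ finite: the gradient in $x$ and the arbitrarily high $C^k$ norm both cost powers of $\lambda_{q+1}$, while the temporal concentration needed to make the velocity's $L^1_t$ norm small simultaneously inflates the density's $L^p_t$ norm for large $p$. Threading this needle requires a delicate three‑parameter hierarchy $(\lambda_q, \mu_q, \text{amplitude})$ and, I expect, an inductive scheme where the target exponent $(p,k)$ at stage $q$ is itself sent to infinity slowly with $q$ — so that at each finite stage only finitely much regularity must be controlled, but the limit lands in the full intersection. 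Verifying that the error $R_{q+1}$ can still be made to decay under these competing constraints — in particular that the temporal support of the bumps can be kept disjoint enough to avoid interaction errors while still covering enough of $[0,T]$ to drive $R_q\to 0$ — is the crux of the proof.
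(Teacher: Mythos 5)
Your proposal follows essentially the same route as the paper: convex integration on the continuity--defect equation with stationary Mikado building blocks modulated by temporally intermittent profiles, the key asymmetry being that the velocity's temporal factor is controlled only in $L^1_t$ while the density's stays bounded in every $L^p_t$ with $p<\infty$, together with a diagonal scheme in which the target exponents $(p,k)$ grow along the iteration and nonuniqueness is extracted from a nontrivial solution with compact temporal support coexisting with the zero solution. The only cosmetic discrepancy is that for this linear equation the density perturbation carries the full amplitude $R_k$ (not $\|R_q\|^{1/2}$, which is the convention for quadratic nonlinearities), the velocity perturbation's amplitude being independent of $R$.
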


This result is proved by the convex integration technique,  brought to fluid dynamics by the pioneering work of De Lellis and Székelyhidi~\cite{MR2600877}, and has seen applications to the transport equation in \cite{MR3884855,1902.08521,2003.00539,2004.09538}. More details on the background and historical development will be discussed shortly. The key ingredient in the proof of Theorem~\ref{thm:main_thm_short} is the use of temporal intermittency,  introduced in our previous works \cite{2004.09538, 2009.06596, 2105.12117}. In particular, it improves our previous result \cite{2004.09538} in terms of the integrability in time of the solution $\rho$, and the spatial regularity of $u$ and $\rho$. Moreover, Theorem \ref{thm:main_thm_short} is sharp in the following two ways.
\begin{enumerate}
    \item  The vector field can not be $  L^1_t W^{1,\infty} $ for which any $L^1_{t,x} $ solution of \eqref{eq:the_equation} must coincide\footnote{For instance, by a duality argument and a commutator estimate.}  a.e.  with the Lagrangian solution;
    
    \item The density class can not have $L^\infty_t C^k$ regularity any for $k\in \NN$ due to the DiPerna-Lions condition \eqref{eq:DL_condition}.
\end{enumerate}

%%%%%%%%%%%%%%%%%%%%%%%%%%%%%%%%%%%%%%%%%%
\subsection{Background and comparison}
%%%%%%%%%%%%%%%%%%%%%%%%%%%%%%%%%%%%%%%%%%
While the classical method of characteristics implies the well-posedness of \eqref{eq:the_equation} for Lipschitz vector fields, for non-Lipschitz vector fields, the method of characteristics no longer applies, and the well-posedness of \eqref{eq:the_equation} becomes challenging. The renormalization theory of DiPerna-Lions \cite{MR1022305}  provides powerful well-posedness of \eqref{eq:the_equation} under suitable Sobolev regularity assumptions on the vector field, and the renormalized solutions are shown to be unique in the regime \eqref{eq:DL_condition}.

Since Aizenman's example~\cite{MR482853}, there have been examples of nonuniqueness at the Lagrangian level~\cite{MR2009116,MR3904158,MR2030717,MR3656475,1911.03271}, that is, constructions of vector fields whose flow
maps exhibit degeneration. However, for a long time, the existence of nonunique (Eulerian) weak solutions of \eqref{eq:the_equation} for divergence-free Sobolev vector fields $u \in L^1_t W^{1,p}$ was unknown. To our knowledge, the first Eulerian construction of nonuniqueness was obtained in \cite{MR3393180} using the framework of \cite{MR2600877} for bounded vector fields.

Inspired by the spatially intermittent construction in \cite{MR3898708}, the breakthrough result~\cite{MR3884855} of Modena and Sz\'ekelyhidi gave the first example of a Sobolev vector field with nonunique weak solutions to \eqref{eq:the_equation} and led to a lot of interest in improving nonuniqueness constructions to larger functional classes. Below we list the regimes where the nonuniqueness has been achieved:

\begin{enumerate}
    \item \cite{MR3884855,MR4029736} (Modena and Sz\'ekelyhidi): $\rho \in C_t L^p  $ when $u \in C_t W^{1,q} $ for $\frac{1}{p}  + \frac{1}{q} > 1 + \frac{1}{d-1}$ and $d\geq 3$. 
    
    \item \cite{1902.08521} (Modena and Sattig): $\rho \in C_t L^p  $ when $u \in C_t W^{1,q} $ for $\frac{1}{p}  + \frac{1}{q} > 1 + \frac{1}{d}$.
    
\item \cite{2003.00539} (Bru\`e, Colombo, and De Lellis): positive\footnote{Well-posedness for positive $\rho$ can go beyond the DiPerna-Lions range, see \cite[Theorem 1.5]{2003.00539}} $\rho \in C_t L^p  $ when $u \in C_t W^{1,q}$ for $\frac{1}{p}  + \frac{1}{q} > 1 + \frac{1}{d}$. 

\item \cite{2004.09538} (Cheskidov and Luo): $\rho \in L^1_t L^p  $ when $u \in L^1_t W^{1,q} $ for $\frac{1}{p}  + \frac{1}{q} > 1$ and $d\geq 3$.
\end{enumerate}

In summary, in the class of $L^\infty_t L^p$ densities, the nonuniqueness has been achieved in the regime $\frac{1}{p}  + \frac{1}{q} > 1 + \frac{1}{d}$, while nonuniqueness in the regime $\frac{1}{p}  + \frac{1}{q} > 1  $ is possible if one settles for $L^1_t L^p$ densities. However, it was not known whether $\frac{1}{p}  + \frac{1}{q} = 1  $ is still the critical threshold for $L^1_t L^p$ densities.

Our main goal here is to show that the DiPerna-Lions scaling $\frac1p + \frac1q=1$ becomes irrelevant once the time integrability of $\rho$ is slightly weakened. In particular, Theorem~\ref{thm:main_thm_short} follows from the  following  convex integration construction.

\begin{theorem}\label{thm:main_thm}
Let $d\geq  2$, $\ep>0$, and $N\in \NN $. Let $ \widetilde{\rho} \in C^\infty  (   \TT^d\times \RR )$ be such that $ \Supp_t  \widetilde{\rho} \subset (0,T)$ and $\fint_{\TT^d} \rho(x,t)\, dx =0$ for all $t\in \RR $.

Then there exist a divergence-free vector field $u: \TT^d \times [0,T] \to \RR^d$ and a density $\rho : [0,T] \times \TT^d \to \RR $ such that all of the following hold.

\begin{enumerate}
	\item   $u\in L^1(0,T;W^{1,p}(\TT^d))$  and $\rho \in L^p(0,T; C^k(\TT^d))$ for all $1\leq p<\infty$ and $k\in \NN$.

	\item $\rho u \in L^1([0,T] \times \TT^d)$ and $(\rho, u)$ is a weak solution to \eqref{eq:the_equation} in the sense of \eqref{eq:def_weak_solutions}.
	
	\item The  deviation of $\rho$ in $C^N(\TT^d) $ norm is small: $ \|\rho - \widetilde{\rho} \|_{L^N_t C^N} \leq \ep$.  
	
	\item $ \rho$ has a compact temporal support:   $  \Supp_t \rho \subset \Supp_t \widetilde{\rho} $.
	
\end{enumerate}
\end{theorem}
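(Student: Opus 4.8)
The plan is to prove Theorem~\ref{thm:main_thm} by a convex integration scheme that is \emph{iterative in frequency but exploits temporal concentration} rather than spatial concentration, so that the velocity field needs only one small length scale in time while the density stays smooth in space. I would set up a sequence of smooth pairs $(\rho_q, u_q)$ solving, instead of the exact transport equation, the relaxed system
\begin{equation*}
\partial_t \rho_q + u_q \cdot \nabla \rho_q = \Div R_q , \qquad \Div u_q = 0 ,
\end{equation*}
where $R_q$ is a ``transport error'' vector field with $\|R_q\|_{L^1_{t,x}} \to 0$ and the additional feature that $\Supp_t R_q \subset \Supp_t \widetilde\rho$ is preserved at every stage. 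The initial step takes $(\rho_0, u_0) = (\widetilde\rho, 0)$ (or a mollification), so $R_0 = \partial_t \widetilde\rho$, which is smooth and compactly supported in time. The convex integration increment would be $\rho_{q+1} = \rho_q + \vartheta_{q+1}$ and $u_{q+1} = u_q + w_{q+1}$, where the perturbations are built from a fixed family of ``Mikado''-type or shear building blocks in space, each modulated by a temporal profile $g_{q+1}(t)$ concentrated on a very short time interval of length $\tau_{q+1} = \lambda_{q+1}^{-\beta}$, normalized so that $\|g_{q+1}\|_{L^1_t}=1$ but $\|g_{q+1}\|_{L^p_t}$ is only mildly large for each fixed $p$. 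The key algebraic identity is that the low-frequency part of the product $w_{q+1}\,\vartheta_{q+1}$ cancels the old error $R_q$ pointwise in time, which is where the incompressibility of the spatial building blocks and a careful choice of their (constant, unit-flux) directions enter.

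The key steps, in order, are: (i) fix the geometry — choose finitely many spatial building blocks (Mikado densities and fields, or the intermittent jets/shears adapted to $d\geq 2$ as in \cite{2003.00539,2004.09538}) together with the bilinear cancellation lemma expressing an arbitrary vector $R$ as a sum of tensor products of these blocks with smooth coefficients; (ii) define the temporal profiles $g_{q+1}$ by rescaling a single bump, and the spatial oscillation at frequency $\lambda_{q+1}$, and write down $\vartheta_{q+1}$ (the ``principal'' correction), its divergence-free companion corrector $\vartheta_{q+1}^{c}$, and the velocity increment $w_{q+1}$ so that $\Div u_{q+1}=0$ exactly; (iii) derive the new error $R_{q+1}$ by grouping terms into an oscillation error, a transport/Nash error (from $\partial_t \vartheta_{q+1}$ and from $u_q\cdot\nabla$ of the correction), a corrector error, and a ``temporal'' error coming from $g_{q+1}' $, then invert the divergence (standard antidivergence operator, using that each term has zero spatial mean by the mean-zero hypothesis on $\widetilde\rho$ and on the building blocks); (iv) choose the parameters — spatial frequency $\lambda_{q+1}$ growing super-exponentially, temporal length $\tau_{q+1}$, amplitude $\delta_{q+1}$ — so that, schematically, $\|w_{q+1}\|_{L^1_t W^{1,p}}$, $\|\vartheta_{q+1}\|_{L^p_t C^k}$ and $\|R_{q+1}\|_{L^1_{t,x}}$ all obey geometric estimates summable to a convergent scheme for every fixed $p$ and $k$ simultaneously; (v) pass to the limit $\rho = \lim \rho_q$, $u = \lim u_q$, check $\rho u \in L^1$, the weak formulation, the $C^N$-closeness $\|\rho-\widetilde\rho\|_{L^N_t C^N}\leq \ep$ (controlled by making $\vartheta_1$ small and all later increments tiny in that one norm), and the temporal support inclusion (immediate because every $g_{q+1}$ and every coefficient is supported where $R_q$ is, hence where $\widetilde\rho$ is).

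The main obstacle — and the conceptual heart of the paper — is step (iv): balancing the competing demands of \emph{space} and \emph{time} regularity. Spatial smoothness of $\rho$ to all orders $C^k$ forbids any spatial intermittency (the density cannot concentrate on thin spatial tubes), so one cannot borrow the usual $L^p$-vs-$L^\infty$ gains from spatial concentration; all the ``room'' to beat the DiPerna--Lions scaling must come from temporal concentration alone, i.e.\ from the fact that $\|g_{q+1}\|_{L^1_t}$ is much smaller than $\|g_{q+1}\|_{L^\infty_t}$ while each $\|g_{q+1}\|_{L^p_t}$ for $p<\infty$ is only a fixed power of $\tau_{q+1}^{-1}$ worse. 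The delicate point is that $u$ must live in $L^1_t W^{1,p}$ for \emph{every} finite $p$ with a single construction: one cannot let the loss blow up as $p\to\infty$, so the temporal profiles must be chosen with a self-improving tail (e.g.\ a slowly-growing number of disjoint bumps, or a profile whose $L^p_t$ norm grows like $(\log)^{\#}$ rather than a power), and one must check that the Nash/temporal error produced by differentiating $g_{q+1}$ in time — which costs a factor $\tau_{q+1}^{-1}$ — is still defeated by the spatial-frequency gain $\lambda_{q+1}^{-1}$ from the antidivergence, forcing the relation $\tau_{q+1}^{-1} \ll \lambda_{q+1}$ and a correspondingly careful ordering of the parameters. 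Getting these inequalities to close simultaneously for all $p$ and all $k$, while keeping $\Supp_t\rho$ pinned inside $\Supp_t\widetilde\rho$, is where essentially all the work lies; the rest is the now-standard convex integration bookkeeping.
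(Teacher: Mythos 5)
Your overall architecture --- the continuity--defect relaxation $\p_t\rho_q+u_q\cdot\nabla\rho_q=\D R_q$, the start $(\widetilde\rho,0,R_0)$ with $R_0$ an antidivergence of $\p_t\widetilde\rho$, stationary Mikado-type spatial blocks modulated by concentrated temporal profiles, the decomposition of $R_{q+1}$ into oscillation/linear/corrector/temporal pieces, and the limit passage with temporal support tracking --- is the same as the paper's. The paper packages the single increment as Proposition~\ref{prop:main_prop} and then iterates it exactly as you describe.

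There is, however, a genuine gap in your step (iv) as formulated. You try to make \emph{one} scheme whose increments are small in $L^p_tW^{1,p}$ and $L^p_tC^k$ for \emph{all} $p$ and $k$ simultaneously, and you propose to achieve this by choosing temporal profiles with only logarithmic $L^p_t$ growth. This cannot work at a fixed spatial frequency: an increment built from blocks oscillating at frequency $\sigma\mu$ has $C^k$ norm of size $(\sigma\mu)^k$ times a temporal amplitude, and no single temporal prefactor can defeat $(\sigma\mu)^k$ for every $k\in\NN$ at once. The paper resolves this not with special profiles but by \emph{diagonalization}: Proposition~\ref{prop:main_prop} is stated for a single finite exponent $p$, and along the iteration one applies it with $p_n=N2^n\to\infty$ and $\delta_n=\ep 2^{-n}$; since $\|\cdot\|_{L^{p'}_tC^{k}}\lesssim\|\cdot\|_{L^{p_n}_tC^{p_n}}$ on a bounded time interval once $p_n\geq\max(p',k)$, the tail of the series converges in every fixed $L^{p'}_tC^{k}$. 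Your parameter ordering is also miscalibrated: the temporal concentration is \emph{not} constrained by $\tau_{q+1}^{-1}\ll\lambda_{q+1}$; in the paper $\kappa=\lambda^{(d-2\gamma)/\alpha}$ is an enormous power of the spatial frequency. The acceleration error $\mathcal{B}(\p_t(\widetilde g_kR_k),\bp_k)$ is controlled instead because (a) the density carries only the small share $\kappa^\alpha$ of the temporal amplitude, so $\|\p_t\widetilde g_k\|_{L^1_t}\sim\sigma\kappa^\alpha$, and (b) the antidivergence of the concentrated Mikado density gains the full $\mu^{-1-(d-1)/r}\approx\mu^{-d}$ through the potential $\mathbf\Omega_k$, not merely $\lambda^{-1}$; closing this requires $\kappa^\alpha\lesssim\mu^{d}$, i.e.\ the splitting exponent $\alpha$ must be tiny, and it is exactly why the spatial blocks must be stationary. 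Finally, note that the cancellation of $R_q$ is not pointwise in time ($\widetilde g_kg_k$ only averages to $1$), so you need the explicit temporal corrector $\theta_o$ built from $h_k=\sigma\int(\widetilde g_kg_k-1)$, and the $L^1$ convergence of the products $\rho_qu_q$ in the limit step requires carrying the quantitative estimate $\|\theta w+\theta u+\rho w\|_{L^1_{t,x}}\leq M\|R\|_{L^1_{t,x}}$ through every stage of the iteration.
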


\begin{remark}
\hfill
\begin{enumerate}
\item Here our initial data is always zero and attained in the classical sense. It is also easy to show that the obtained solution $\rho$ is continuous in time in the sense of distributions (see Lemma 7.7 in \cite{2004.09538} for details).

\item Theorem \ref{thm:main_thm} continues to hold for the transport-diffusion equation with a parabolic regularization $\Delta^m \rho$ of arbitrary order in the same regularity classes $(\rho, u) \in L^p_t C^k \times L^1_t W^{1,p}$, see Theorem~\ref{thm:zero_viscosity_limit}.  To our knowledge, this is the first example of a PDE where parabolic regularization does not provide any additional rigidity for the uniqueness of a class of weak solutions.

\item The nonunique solutions $\rho$ must change their signs--- it is known since \cite{1812.06817} and \cite{2003.00539} that any sign-definite solution  $ \rho \in L^1_{t,x}$ of  $L^1_t W^{1,d+} $ vector fields is Lagrangian.

\item By the linearity of \eqref{eq:the_equation}, for any initial data $\rho_0 \in L^p(\TT^d )$, the constructed vector field gives nonunique solutions in the class $\rho \in L^q_t L^p $ for any $q<\infty$. Indeed, one can add the constructed solution on top of the renormalized solution associated to $  \rho_0$.

%\item Our last remark concerns the concept of structure function and intermittency in turbulence~\cite{2203.11060}. Define the $p$-th order structure function exponent of the velocity perturbation $w$ at frequency $\lambda$ by $\zeta_p:=-\log_{\lambda}\langle |w|^p\rangle$ for some space-time average $\langle \cdot \rangle$. %is the , has almost vertical negative slope for large $\lambda$. More precisely, 
%Then we have $\zeta_1=1$, as expected for vector fields designed by convex integration, but $\zeta_p' \to -\infty$ as $\lambda \to \infty$. Such a structure function results in the intermittency dimension $D \to -\infty$, as $\lambda \to \infty$, not observed in turbulent flows, which can only be achieved with extreme temporal intermittency, essential for the construction.
\end{enumerate}
\end{remark}
%%%%%%%%%%%%%%%%%%%%%%%%%%%%%%%%%%%%%%%%%%
\subsection{Strategy of the proof}
%%%%%%%%%%%%%%%%%%%%%%%%%%%%%%%%%%%%%%%%%%
We conclude with some final remarks on the proof. As said, we used the convex integration technique brought to fluid dynamics by the pioneering work of De Lellis and Székelyhidi \cite{MR2600877}. The groundbreaking technique of \cite{MR2600877} resulted in breakthrough works in the fluids community over the last decade, and we refer readers to \cite{MR2600877,MR3090182,MR3866888,1701.08678,MR3884855,MR3898708} and references therein for a complete account.

The construction follows the same framework of temporal intermittency in our previous work \cite{2004.09538}.  A key difference is the regularity $L^p_t C^k$ for the density, which requires extreme intermittency in time when progressing to high frequencies. Since the density does not enjoy any ``reasonable'' $L^\infty_t$ regularity, from the duality $u \rho \in L^1_{t,x}$ we can gain a surprising regularity of almost $L^1_t$ Lipschitz of the vector field. As in \cite{2004.09538}, this extreme temporal intermittency necessitates the use of stationary building blocks, as otherwise, the error produced by the large acceleration of the density becomes insurmountable with the non-Lipschitzness of the vector field, cf. Lemma \ref{lemma:parameters} below. Once extreme intermittency in time is achieved, a little deduction of the time regularity of the density from $L^\infty_t $ to $L^p_t$ allows us to gain essentially infinitely many derivatives in space for the density.

Finally, since the density enjoys essentially infinite many derivatives in space, the same construction also holds for transport-diffusion equations with diffusion operators of arbitrarily large order in any dimension $d \geq 2$. Surprisingly, even in dimension $d=2$ a diffusion of arbitrarily high order is not able to provide uniqueness for this class of weak solutions.

%%%%%%%%%%%%%%%%%%%%%%%%%%%%%%%%%%%%%%%%%%
\subsection*{Organization}
%%%%%%%%%%%%%%%%%%%%%%%%%%%%%%%%%%%%%%%%%%
The rest of the paper is organized as follows.
\begin{itemize}
\item We prove the main theorem stated in the introduction in Section \ref{sec:proof} by assuming Proposition~\ref{prop:main_prop}, whose proof is the main content of this paper.

\item In Section~\ref{sec:temporal_BB_perturbations}, we first introduce temporal intermittency into the construction, essential for our scheme. Next, we recall Mikado densities and Mikado flows as spatial building blocks. Finally, we use these temporal and spatial building blocks to define the density and velocity perturbations.

\item In Section~\ref{sec:Estimates_on_perturbation}, we first specify the oscillation and concentration parameters and obtain estimates on the velocity and density perturbations claimed in Proposition~\ref{prop:main_prop}.

\item Section~\ref{sec:New_defect_field} is devoted to deriving the new defect field and its estimates, finishing the proof of Proposition~\ref{prop:main_prop}.

\item In Section \ref{sec:transport-diffusion},
we show that the same nonuniqueness holds for transport-diffusion equations with arbitrarily high diffusion as well. 

\item  In Appendix~\ref{sec:append_tools}, we recall some (now standard) technical tools in convex integration, namely the improved H\"older inequalities and antidivergence operators.  
\end{itemize}

%%%%%%%%%%%%%%%%%%%%%%%%%%%%%%%%%%%%%%%%%%
\subsection*{Acknowledgement}
%%%%%%%%%%%%%%%%%%%%%%%%%%%%%%%%%%%%%%%%%%
AC was partially supported by the NSF grant DMS--1909849. XL is partially supported by the NSF grant DMS-1926686. The authors are grateful for the hospitality of the Institute for Advanced Study where a part of this work was done.

%%%%%%%%%%%%%%%%%%%%%%%%%%%%%%%%%%%%%%%%%%%%%%%%%%%%%%%%%%%%%%%%%%%%%%%%%
%%%%%%%%%%%%%%%%%%%%%%%%%%%%%%%%%%%%%%%%%%%%%%%%%%%%%%%%%%%%%%%%%%%%%%%%%%%%%%%%%%
\section{The main proposition and proof of Theorem \ref{thm:main_thm}}\label{sec:proof}
%%%%%%%%%%%%%%%%%%%%%%%%%%%%%%%%%%%%%%%%%%%%%%%%%%%%%%%%%%%%%%%%%%%%%%%%%%%%%%%%%%

%%%%%%%%%%%%%%%%%%%%%%%%%%%%%%%%%%%%%%%%%%
\subsection{Notations}
%%%%%%%%%%%%%%%%%%%%%%%%%%%%%%%%%%%%%%%%%%
Throughout the paper, we fix the spatial domain $\TT^d = \RR^d/\ZZ^d$, identified with a periodic box $[0,1]^d$. Average over $\TT^d$ is denoted by $\fint f = \int_{\TT^d} f$. Functions on $\TT^d$ are identified as periodic ones in $\RR^d$, and we say $f$ is $\sigma^{-1}\TT^d$-periodic if 
$$
f(x + \sigma k) = f(x) \quad \text{for any $k\in \ZZ^d$}.
$$

Spatial Lebesgue norms are denoted by $ \| \cdot  \|_{L^p} = \| \cdot  \|_{L^p(\TT^d)} $, while we write $\| \cdot  \|_{L^p_{t,x}} $ for Lebesgue norms taken in the space-time domain $\TT^d\times [0,T]$. If a function $f$ is time-dependent, we write $ \|f(t) \|_{L^p}, $
to indicate that the spatial norm is taken at a time slice $t\in [0,T]$. For a Banach space $X$, we use the notation $\| \cdot \|_{L^p_t X}$ to denote the norm on Bochner spaces $L^p([0,T]; X)$, such as $\| \cdot  \|_{L^1_t W^{k,p}}$ and $\| \cdot  \|_{L^p_t C^{k}}$.

The differentiation operations such as $\nabla$, $\Delta$, and $\D$ are meant for differentiation in space only.

We use the notation $X \lesssim Y$ which means $X \leq C Y$ for some constant $C >0$.  The notation $X \sim Y$ means both $X \lesssim Y$ and $Y \lesssim X$ at the same time.

%%%%%%%%%%%%%%%%%%%%%%%%%%%%%%%%%%%%%%%%%%
\subsection{Continuity-defect equation}
%%%%%%%%%%%%%%%%%%%%%%%%%%%%%%%%%%%%%%%%%%
As in \cite{MR3884855}, we consider the continuity-defect equation to obtain approximate solutions to the transport equation:
\begin{equation}\label{eq:defect_equation}
\begin{cases}
\p_t \rho + \D(\rho u) = \D R\\
\D  u =0,
\end{cases}
\end{equation}
where $R: [0,T] \times \TT^d \to \RR^d$ is called the defect field. In what follows, a triple $(\rho, u, R)$ will denote a smooth solution to \eqref{eq:defect_equation}. Recall that the for $f\in L^1_{t,x} $ function, its temporal support $\Supp_t f $ is the closure of the set $  \{ t\in [0,T]: |f(x,t)| > 0  \quad a.e.\, \, x\in \TT^d  \} $.

We now state the main proposition of the paper and use it to prove Theorem \ref{thm:main_thm}.
\begin{proposition}\label{prop:main_prop}
Let $d \geq  2$. There exist a universal constant $M>0$ such that the following holds. 

Suppose $(\rho, u, R) $ is a smooth solution of \eqref{eq:defect_equation} on $[0,1]$ such that $\Supp_t R \subset (0,1)$. Then for any $  1\leq p \in \NN $ and any $0<\delta<1/2$, there exists another smooth solution $(\rho_1, u_1, R_1)$ of \eqref{eq:defect_equation} on $[0,1]$ such that the density perturbation $\theta: = \rho_1 -\rho$ and the vector field perturbation $ w = u_1 -u$ satisfy
\begin{enumerate}
    \item Both $\theta$ and $ w  $ have zero spacial mean and
\begin{equation} \label{eq:support_of_theta}
\Supp_t \theta \subset  \Supp_t R.
\end{equation}

\item $\theta$ and $ w  $ satisfy the estimates
\begin{align}
\| \theta \|_{L^p_t C^p} &\leq \delta,  \label{eq:main_prop_0}\\
 \| w  \|_{L^1_t W^{1, p }}  &\leq \delta,
 \label{eq:main_prop_1}\\
\| \theta w + \theta u + \rho w   \|_{L^1_{t,x}}  & \leq  M \|R \|_{L^1_{t,x}}. \label{eq:main_prop_2}
\end{align}

\item  The new defect field $R_1$ satisfies
\begin{align} \label{eq:support_of_R_1}
  \Supp_t R_1  \subset \Supp_t R,
\end{align} 
and  the estimate
\begin{align}
\|R_1 \|_{L^1_{t,x}} \leq \delta. 
\end{align}

\end{enumerate}

\end{proposition}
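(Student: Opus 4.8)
The plan is to perform one step of a convex-integration iteration: build the perturbations from \emph{stationary} Mikado building blocks modulated by sharply intermittent temporal profiles, and then absorb every resulting error into the new defect $R_1$ through the antidivergence operator of Appendix~\ref{sec:append_tools}. First I would invoke a partition-of-unity (geometric) lemma to write $-R=\sum_{k\in\Lambda}a_k^2\,e_k$ for a fixed finite set of rational directions $e_k\in\mathbb S^{d-1}$ and smooth amplitudes $a_k=a_k(t,x)$ that are supported in time inside $\Supp_t R$ and bounded, with all derivatives, in terms of $R$. To each $k$ I attach the spatial building blocks of Section~\ref{sec:temporal_BB_perturbations}: a divergence-free Mikado flow $W_k$ and Mikado density $\Theta_k$ on $\TT^d$ with $\D(\Theta_k W_k)=0$, $\fint\Theta_k=\fint W_k=0$, $\fint\Theta_k W_k=e_k$, concentrated at scale $r$ and oscillating at scale $\sigma^{-1}$, together with an intermittent temporal profile $g_k(t)$ of frequency $\kappa$, with $\Supp_t g_k\subset\Supp_t R$, pairwise disjoint supports (so $g_k g_{k'}\equiv0$ for $k\ne k'$) and a normalization fixing the temporal mean of the relevant product, $\overline{g_k^2}$, to be $1$. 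I then set the principal parts $\th_p=\sum_k a_k g_k\,\Theta_k(\sigma\cdot)$, $w_p=\sum_k a_k g_k\,W_k(\sigma\cdot)$, add correctors $\th_c,w_c$ (a divergence corrector making $w:=w_p+w_c$ exactly divergence-free, mean-zero correctors, and a temporal corrector killing the low-frequency part of $\partial_t\th$), and put $\th:=\th_p+\th_c$, $\rho_1:=\rho+\th$, $u_1:=u+w$; by construction $\th,w$ have zero spatial mean and $\Supp_t\th\subset\Supp_t R$, which is item (1).

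Next I would verify the estimates in item (2). For \eqref{eq:main_prop_0} I distribute the $p$ spatial derivatives onto $\Theta_k(\sigma\cdot)$ (paying powers of $\sigma$ and concentration factors) and onto the amplitudes, and control the result by the $L^p_t$-smallness of the intermittent $g_k$; this is exactly where the relaxation from $L^\infty_t$ to $L^p_t$ in the density norm buys room — an intermittent profile cannot be small in $L^\infty_t$ but can be made small in $L^p_t$. The bound \eqref{eq:main_prop_1} is softer (only one spatial derivative, $L^1$ in time). For \eqref{eq:main_prop_2} the only term that is \emph{not} made arbitrarily small is the diagonal product $\th_p w_p=\sum_k a_k^2 g_k^2\,\Theta_k W_k(\sigma\cdot)$, for which $\|\th_p w_p\|_{L^1_{t,x}}\lesssim\sum_k\|a_k\|_\infty^2\,\|\Theta_k W_k\|_{L^1}\lesssim\|R\|_{L^1_{t,x}}$ with a universal implied constant; this is what fixes the universal $M$. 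The mixed contributions $\th u$, $\rho w$, the off-diagonal terms and the corrector terms are all $\le\delta$ by the eventual choice of parameters.

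For item (3), inserting $(\rho_1,u_1)$ into \eqref{eq:defect_equation} and using that $(\rho,u,R)$ solves it yields $\D R_1=\partial_t\th+\D(\rho w+\th u+\th w)+\D R$, which I split into (i) the oscillation error $\D(\th_p w_p)+\D R$, (ii) the linear error $\partial_t\th_p+\D(\rho w_p)+\D(\th_p u)$, and (iii) corrector errors. In (i), because $\D(\Theta_k W_k)=0$ the spatial derivative in $\D\big(a_k^2 g_k^2\,\Theta_k W_k(\sigma\cdot)\big)$ can only fall on $a_k^2$, producing $\sum_k g_k^2\,\nabla(a_k^2)\cdot\big(e_k+\mathbb P_{\ne0}[\Theta_k W_k](\sigma\cdot)\big)$; the $e_k$-piece combined with $\D R$ collapses to a pure temporal oscillation $\sum_k(g_k^2-\overline{g_k^2})\,\partial_{e_k}(a_k^2)$, removed by a temporal antidivergence with gain $\kappa^{-1}$, while the genuinely spatial part is removed by the antidivergence $\mathcal R$ with gain $\sigma^{-1}$. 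In (ii), $\partial_t\th_p$ is of size $\sim\kappa$ but $\Theta_k$ has zero mean, so $\mathcal R$ applies and, together with the temporal corrector, controls it; $\D(\rho w_p)$ and $\D(\th_p u)$ likewise gain $\sigma^{-1}$ from $\mathcal R$ and the $L^1_t$-smallness of $g_k$. Setting $R_1$ to be the sum of these antidivergences gives $\Supp_t R_1\subset\Supp_t R$, and all of \eqref{eq:main_prop_0}, \eqref{eq:main_prop_1}, $\|R_1\|_{L^1_{t,x}}\le\delta$ follow once $r,\sigma,\kappa$ are fixed in a hierarchy $\kappa\gg\sigma\gg r^{-1}\gg$ (a suitable norm of $(\rho,u,R)$ together with $\delta^{-1}$) — this is the content of Lemma~\ref{lemma:parameters}.

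The hard part is precisely Lemma~\ref{lemma:parameters}: forcing $\|R_1\|_{L^1}\le\delta$ drives $\sigma$ and, above all, the temporal frequency $\kappa$ to be enormous, which simultaneously inflates the acceleration $\partial_t\th$ and the high $C^p$-derivatives of $\th$, and one must reconcile this with $\|\th\|_{L^p_t C^p}\le\delta$ while $u$ is being pushed toward $L^1_t$-Lipschitz regularity. This closes only because (a) $\th$ is measured in $L^p_t$ rather than $L^\infty_t$, so extreme temporal concentration of the $g_k$ is essentially free, and (b) the building blocks are stationary, so the flow contributes no extra temporal error and the structural identity $\D(\Theta_k W_k)=0$ annihilates what would otherwise be the dominant oscillation term. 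Making the exponent bookkeeping actually close — choosing the concentration, oscillation, and temporal-oscillation parameters so that every error term lands below $\delta$ — is where essentially all the work lies.
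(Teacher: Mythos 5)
Your overall architecture (stationary Mikado densities/flows oscillated by sharply concentrated temporal profiles, cancellation of the low mode of $\theta_p w$ against $\D R$, a temporal corrector for the leftover $(\text{mean-one oscillation}-1)$ term, and antidivergences for everything else) is the paper's strategy. But there is a genuine gap exactly at the point you yourself identify as "where essentially all the work lies," and it is not a bookkeeping issue: your temporal normalization cannot produce \eqref{eq:main_prop_0}. You put the \emph{same} profile $g_k$ on both $\theta_p$ and $w_p$ and normalize $\overline{g_k^2}=1$. If $g_k$ is supported on a $1/\kappa$ fraction of each period, this forces $g_k\sim\kappa^{1/2}$ on its support, hence $\|g_k\|_{L^p_t}\sim\kappa^{1/2-1/p}$, which \emph{grows} with $\kappa$ for every $p>2$. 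Since the spatial factor $\|a_k\Theta_k(\sigma\cdot)\|_{C^p}$ is bounded below (and in fact carries $(\sigma\mu)^p$), $\|\theta\|_{L^p_tC^p}$ cannot be made $\leq\delta$ for the given, arbitrarily large $p$; your remark that "an intermittent profile ... can be made small in $L^p_t$" is true only for $p<2$ under the symmetric normalization. The paper's key device is precisely an \emph{asymmetric} split of the concentration: $\widetilde g_k=\kappa^{\alpha}G_k(\sigma t)$ on the density and $g_k=\kappa^{1-\alpha}G_k(\sigma t)$ on the velocity with $\alpha$ small (see \eqref{eq:estimates_g}), so that the density profile is nearly $L^\infty_t$-normalized, $\|\widetilde g_k\|_{L^p_t}\sim\kappa^{\alpha-1/p}\to0$ once $\alpha<1/p$, while the velocity profile is nearly $L^1_t$-normalized, $\|g_k\|_{L^1_t}\sim\kappa^{-\alpha}\to0$, and the product still has unit time average. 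Without introducing this asymmetry (or an equivalent one) your scheme does not close.

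Two secondary points. First, your bound for \eqref{eq:main_prop_2}, $\|\theta_pw_p\|_{L^1_{t,x}}\lesssim\sum_k\|a_k\|_\infty^2\|\Theta_kW_k\|_{L^1}\lesssim\|R\|_{L^1_{t,x}}$, is false as written: $\|a_k\|_{L^\infty}^2$ is comparable to $\|R\|_{L^\infty}$, not $\|R\|_{L^1}$. To get a \emph{universal} $M$ one must use the improved H\"older inequality (Lemma~\ref{lemma:improved_Holder}) in space and in time, which yields $\|a_k^2\|_{L^1_{t,x}}\|\Theta_kW_k\|_{L^1}$ plus $\sigma^{-1}$ and $\kappa/(\kappa\sigma)$-type errors that are then absorbed by taking the frequency large (this is exactly Lemma 4.4 of the paper). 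Second, your geometric decomposition $-R=\sum_ka_k^2e_k$ over rational directions and the attendant divergence corrector for $w_p=\sum_ka_kg_kW_k$ are unnecessary complications here: since the defect is a vector (not a positive-definite tensor) and the density may change sign, the paper simply writes $R=\sum_kR_k\mathbf{e}_k$ in the coordinate basis, places the amplitude $R_k$ only on the density perturbation \eqref{eq:def_theta_p}, and leaves $w=\sum_kg_k\mathbf{W}_k$ amplitude-free and hence exactly divergence-free \eqref{eq:def_w}; the geometric-lemma route also has smoothness problems where $R$ vanishes.
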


%%%%%%%%%%%%%%%%%%%%%%%%%%%%%%%%%%%%%%%%%%
\subsection{Proof of Theorem \ref{thm:main_thm} }
%%%%%%%%%%%%%%%%%%%%%%%%%%%%%%%%%%%%%%%%%%
\begin{proof}\label{subsec:proof_main_thm}
We assume $T=1$ without  loss of generality. We will construct a sequence $(\rho_n, u_n , R_n) $, $n=1,2\dots$ of solutions to \eqref{eq:defect_equation} as follows. 
For $n=1$, we set 
\begin{align*}
\rho_1(t) &:= \widetilde{\rho},  \\ 
u_1(t) &:= 0, \\
R_1(t) &:=  \mathcal{R} \big( \p_t \widetilde{\rho} \big),
\end{align*}
where $\mathcal{R}=\Delta^{-1}\nabla $ is the inverse divergence in Appendix \ref{sec:append_tools}. Then $ (\rho_1, u_1 , R_1)  $ solves \eqref{eq:defect_equation} trivially by the constant mean assumption on $\widetilde{\rho} $.

Next, we apply Proposition \ref{prop:main_prop} inductively to obtain $ (\rho_n, u_n , R_n)$ for $n=2,3\dots$ as follows.
Given $(\rho_n, u_n,R_n) $, we apply Proposition \ref{prop:main_prop} with parameters \[
  p_n  =N2^n, \qquad  \delta_n=\varepsilon 2^{-n},
\]
to obtain a new triple $(\rho_{n+1}, u_{n+1},R_{n+1}) $. Then the perturbations $\theta_n : = \rho_{n+1} - \rho_{n}$, $w_n : = u_{n+1} - u_{n}$ and the defect fields $R_n$ satisfy
\begin{subequations}
\begin{align}
  &\|\theta_n \|_{L^{p_n }_t C^{p_n }} \leq   \delta_n, 
 \qquad   \| w_n \|_{L^1_t W^{1,p_n  }} \leq  \delta_n,  \label{eq:proof_main_theorem_1a}\\
 &  \|R_{n+1}\|_{L^1_{t,x}} \leq \delta_n, \label{eq:proof_main_theorem_1b}\\
&\| \theta_n w_n + \theta_n u_n + \rho_n w_n   \|_{L^1_{t,x}}\leq M\|R_n\|_{L^1_{t,x}},\label{eq:proof_main_theorem_1c}
\end{align}
\end{subequations}
for all $n =1,2\dots$.
In addition, due to \eqref{eq:support_of_R_1} and \eqref{eq:support_of_theta}, we have
\begin{equation} \label{eq:supp_of_theta_n}
 \Supp_t \theta_n \subset \Supp_t \widetilde \rho, \qquad \forall n\in \mathbb{N}.
\end{equation}
Hence by \eqref{eq:proof_main_theorem_1a} there exists $(\rho, u) \in L^p_t C^p \times L^1_t W^{1,p}$ for all $p \in \NN $  such that
\begin{align}
 \rho_n   \rightarrow  \rho \quad \text{in } L^p_t C^p  \quad \text{and}\quad
 u_n  \rightarrow  u  \quad \text{in } L^1_t W^{1, p },\quad   \forall p \in \NN   .
\end{align}
Moreover,
 $\Supp_t \rho \subset \Supp_t \widetilde \rho$ due to \eqref{eq:supp_of_theta_n}. Since $p_n \geq N$ and the time interval is of length $1$,
 $$
\|\rho - \widetilde{\rho} \|_{L^N_t C^N} \leq \sum_{n\geq 1} \|\theta_n \|_{L^N_t C^N} \leq \sum_{n\geq 1} \|\theta_n \|_{L^{p_n }_t C^{p_n }} \leq   \ep.
$$

It remains to show $(\rho, u) $ is a weak solution.  We first prove that $\rho u \in L^1_{t,x}$ and $ \rho_n u_n  \rightarrow  \rho u$  in $L^1_{t,x}$. Using \eqref{eq:proof_main_theorem_1c},
\begin{equation} \label{eq:product_theta_n_u_n}
\|\rho_{n+1} u_{n+1} - \rho_{n} u_{n}\|_{L^1_{t,x}}  \leq   M\delta_{n-1} \quad \text{for $n \geq 2$. }
\end{equation}
Thus the sequence $\rho_n u_n$ is Cauchy in $L^1_{t,x}$ and consequently there is $G \in L^1_{t,x}$ such that $ \rho_n u_n  \rightarrow  G$   in  $  L^1_{t,x} $. Now we claim that $G = \rho u$. Thanks to \eqref{eq:product_theta_n_u_n}, passing to subsequences and dropping subindices, we get $\rho_n \to \rho $ and $u_n \to u$ a.e. in $\TT^d \times [0,1]$. So $\rho_n u_n \to G$ a.e. in $\TT^d \times [0,1]$ and hence $\rho u  =G$  and $ \rho_n u_n  \rightarrow  \rho u$  in $L^1_{t,x}$. Since in addition $R_n  \rightarrow  0  $
 in  $ {L^1_{t,x}}$ by \eqref{eq:proof_main_theorem_1b}, it is standard to show that $(\rho, u) $ is a weak solution to \eqref{eq:the_equation}

\end{proof}

%%%%%%%%%%%%%%%%%%%%%%%%%%%%%%%%%%%%%%%%%%%%%%%%%%%%%%%%%%%%%%%%%%%%%%%%%
\section{Temporal intermittency, building blocks, and perturbations} \label{sec:temporal_BB_perturbations}
The rest of the paper is devoted to the proof of Proposition~\ref{prop:main_prop}. In this section, we introduce the temporal and spatial building blocks and use them to define the density and velocity perturbations.

%%%%%%%%%%%%%%%%%%%%%%%%%%%%%%%%%%%%%%%%%%
\subsection{Summary of parameters}
%%%%%%%%%%%%%%%%%%%%%%%%%%%%%%%%%%%%%%%%%%
Given arbitrarily large $p\in \NN$ as in the statement of Proposition~\ref{prop:main_prop}, we will fix three exponents: $r>1$ very close to $1$,  $0<\alpha \ll 1$, and $0<\gamma \ll 1$ in Lemma \ref{lemma:parameters} below. These exponents are used to define three large parameters: concentrations $\kappa ,\mu \geq 1$ and oscillation $ \sigma \in \NN$. These three large parameters satisfy the hierarchy $1\ll \sigma  \ll \mu \ll \kappa  $, whose meaning will be made precise in Section \ref{sec:Estimates_on_perturbation}, but their exact values will be fixed at the end depending on the given solution $(\rho,u,R) $.

%%%%%%%%%%%%%%%%%%%%%%%%%%%%%%%%%%%%%%%%%%
\subsection{Temporal functions \texorpdfstring{$\widetilde{g}_k $}{tilde gk} and \texorpdfstring{$g_k $}{gk}}
%%%%%%%%%%%%%%%%%%%%%%%%%%%%%%%%%%%%%%%%%%

We start with defining the intermittent oscillatory functions $\widetilde{g}_k $ and $g_k$ that lie at the heart of our scheme. First, we choose a profile function $\widetilde G  \in C^\infty_c ((0,1))$ such that 
\begin{equation}\label{eq:def_of_tilde_G}
\int_{[0,1]} \widetilde G^2  \,dt =1, \qquad \int_{[0,1]} \widetilde G  \,dt =0, \qquad  \|\widetilde G\|_{L^\infty} \leq 2,   
\end{equation}
and for $k=1 \dots d$ define $ G_k$ to be the $1$-periodic extension of $\widetilde G(\kappa (t-t_k))$, where $t_k \in [0,1 ]$ are chosen so that $ G_k$ have disjoint supports for different $k$. In other words, $  G_k(t) =   \sum_{n \in \ZZ} \widetilde G (n + \kappa (t - t_k) )$. We will refer to $\kappa \geq 1$ as the temporal concentration parameter.

Next, for a large oscillation parameter $\sigma \in \NN$ and a small exponent $0<\alpha<1$ to be fixed later, we define $\sigma^{-1}$-periodic functions
\begin{align}
\widetilde{g}_{k }(t) = \kappa^{\alpha}  G_{ k  }(\sigma t), \qquad g_{k }(t) = \kappa^{1-\alpha}  G_{ k }( \sigma t).
\end{align}
We will use $ \widetilde{g}_{\kappa }$ to oscillate the densities $\bp_k$, and $ g_{\kappa } $ to oscillate the vectors $ \bw_k$, defined in the following section. Note that by \eqref{eq:def_of_tilde_G}
\begin{equation} \label{eq:g_theta_g_w_interaction}
\int_{[0,1]} \widetilde{g}_k    g_k \, dt = 1,
\end{equation} 
and by definitions of $\widetilde{g}_k     $ and $g_k$,
\begin{equation} \label{eq:estimates_g}
\|  \widetilde{g}_k \|_{L^q([0,1])} \sim \kappa^{\alpha - \frac{1}{q} }, \qquad
 \|  \widetilde{g}'_k \|_{L^q([0,1])} \sim (\kappa \sigma )\kappa^{\alpha - \frac{1}{q} }, \qquad
\|  {g}_k \|_{L^q([0,1])} \sim \kappa^{1-\alpha  - \frac{1}{q} }.
\end{equation}

%%%%%%%%%%%%%%%%%%%%%%%%%%%%%%%%%%%%%%%%%%
\subsection{Temporal correction function \texorpdfstring{$h_k $}{hk}}
%%%%%%%%%%%%%%%%%%%%%%%%%%%%%%%%%%%%%%%%%%
Now we define a $\sigma^{-1}$-periodic function $  h_k :\RR \to \RR$ by
\begin{equation}
 h_{k} (t):= \sigma \int_0^t (\widetilde{g}_k {g}_{k } -1) \, d\tau,
\end{equation}
so that
\begin{equation}\label{eq:def_h_mu_t}
\sigma^{-1}\p_t  h_{k} =   \widetilde{g}_{k}   {g}_{k}  -1.
\end{equation}
Thanks to \eqref{eq:g_theta_g_w_interaction}, $ h_{k}$  is well-defined and satisfies the estimate
\begin{align}\label{eq:def_h_mu_t_L_1}
\|  h_{k} \|_{L^\infty[0,1]} \leq 1.
\end{align}
The function $h_k$ will be used to design the temporal corrector $\theta_o$ in \eqref{eq:def_theta_o_2}.

%%%%%%%%%%%%%%%%%%%%%%%%%%%%%%%%%%%%%%%%%%
\subsection{Mikado densities and flows}
 %%%%%%%%%%%%%%%%%%%%%%%%%%%%%%%%%%%%%%%%%%
In this subsection, we recall the spatial building blocks for our convex integration construction, Mikado densities and Mikado flows introduced in \cite{MR3614753} and \cite{MR3884855}. These are periodic objects supported on pipes with a small radius. Note that we do not require them to have disjoint supports in space--- each Mikado object will be coupled with a temporal function $ \widetilde{g}_k  $ or $ {g}_k $ to achieve disjoint supports in space-time.

For $k=1, \dots, d$, we denote each standard Euclidean basis vector $\ek = (0,\dots,1,\dots,0)$. For any $x\in \RR^d$ and $k=1, \dots, d$, $x'_k \in \RR^{d-1}$ denotes the vector  $x'_k = (x_1, x_2,\dots, x_{k-1}, x_{k+1},\dots,x_d)$.

Let $d \geq 2$ be the spatial dimension. We fix a vector field $\Omega \in C_c^\infty(\RR^{d-1}  )  $ and a scalar density $ \phi \in C_c^\infty(\RR^{d-1} )$ such that
\begin{equation}\label{eq:def_normlazied_phi}
\Supp \Omega \subset (0,1)^{d-1} \quad  \text{and}  \quad \D \Omega = \phi \quad  \text{and} \quad    \int_{\RR^{d-1}} \phi^2 = 1.
\end{equation}

For each $k = 1,\dots, d$, we define the non-periodic Mikado objects
\begin{equation}\label{eq:non_per_Mikados}
\begin{aligned}
\widetilde \Phi_k (x ) &=   \phi (\mu x_k'),\\
\widetilde \Omega_k (x )& = \mu^{-1}\Omega( \mu x_k'),\\
\widetilde W_k (x ) &= \mu^{ d-1 } \phi  (\mu x_k') \ek,
\end{aligned}
\end{equation}
and then define the $1$-periodic objects $ \Omega_k: \TT^d \to \RR^d$, $\Phi_k: \TT^d \to \RR$ and $W_k: \TT^d \to \RR^d$ as the $1$-periodic extensions of \eqref{eq:non_per_Mikados}, and then rescale them by a large oscillation factor $\sigma \in \NN$:
\begin{equation}\label{eq:bold_Mikados}
\begin{aligned}
 \bp_k(x ) & =    \Phi_k(\sigma x ), \qquad
 \mathbf{\Omega}_k(x ) & = \Omega_k(\sigma x ), \qquad 
 \bwk(x) & =   W_k( \sigma x). 
\end{aligned}
\end{equation} 

We now summarize the properties of the constructed building blocks $ \mathbf \Omega_k $, $\mathbf\Phi_k $, and $\mathbf W_k$ in the following theorem.

\begin{theorem} \label{thm:Mikados}
For all $\sigma \in \NN$, and $\mu \geq 1$,  the density $\mathbf{ \Phi}_k $, potential $\mathbf{\Omega}_k$, and  vector field $\bwk$ defined by \eqref{eq:bold_Mikados} satisfy the following for every $k=1\dots d$.

\begin{enumerate}
\item $\mathbf W_k :\TT^d \to \RR^d $,  $\mathbf\Phi_k:\TT^d  \to \RR $, and $\mathbf\Omega_k:\TT^d  \to \RR^d$ are smooth functions and have zero mean on $\TT^d$.
    \item  $\D \bwk = \D(\bp_k \bwk )=0$ and the density $\mathbf{ \Phi}_k $ is the divergence of the potential $ \sigma^{-1} \mathbf{\Omega}_k$,
\begin{align}\label{eq:Div_Omega_k_close_Phi_k}
  \D \mathbf{\Omega}_k  =     \sigma \mathbf{ \Phi}_k. 
\end{align}

\item For any $1\leq p\leq \infty $ and $s \geq 0$,
\begin{subequations}
\begin{align}
\| \mathbf{\Omega}_k\|_{L^\infty_t L^p} & \lesssim    \mu^{ -1 -\frac{ d-1 }{p}}, \label{eq:BB_Bounds_Omega}\\
 \|\bp_k \|_{  W^{s,p} } & \lesssim   (\sigma \mu)^s  \mu^{- \frac{ d-1 }{p} }, \label{eq:BB_Bounds_Phi}\\
 \|\bwk \|_{W^{s,p}} & \lesssim   (\sigma \mu)^s  \mu^{ (d- 1) \left(1-\frac1p\right)}.\label{eq:BB_Bounds_W}
\end{align}
\end{subequations}

    \item The following identity holds 
\begin{equation} \label{eq:(PtimesW)_self_interaction}
\fint_{\TT^d} \bp_k(x ) \bwk(x) \, dx = \ek .
\end{equation}

\end{enumerate}
\end{theorem}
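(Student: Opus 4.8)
The statement to prove is Theorem~\ref{thm:Mikados}, which collects the basic properties of the Mikado densities $\bp_k$, potentials $\mathbf{\Omega}_k$, and Mikado flows $\bwk$ built by concentrating a fixed $(d-1)$-dimensional profile along pipes and then rescaling by the oscillation factor $\sigma$. This is essentially a bookkeeping lemma, and the plan is to verify each of the four items by unwinding the definitions \eqref{eq:def_normlazied_phi}, \eqref{eq:non_per_Mikados}, \eqref{eq:bold_Mikados} and tracking the scaling in $\mu$ and $\sigma$ carefully.

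\textbf{Smoothness, zero mean, and the divergence structure (items 1--2).} First I would note that $\phi,\Omega$ are smooth and compactly supported in $(0,1)^{d-1}$, so their $1$-periodic extensions are smooth; composing with the linear map $x \mapsto \sigma x$ preserves smoothness and periodicity, giving item~1's smoothness claim. For the zero mean, since $\Omega$ is supported in $(0,1)^{d-1}$ and $\phi = \D\Omega$, the mean of $\widetilde\Phi_k$ over a period equals $\int_{\RR^{d-1}}\phi = \int_{\RR^{d-1}}\D\Omega = 0$ by the divergence theorem, and the mean is unchanged by $1$-periodic extension and by the rescaling $x\mapsto \sigma x$ (the latter because $\sigma\in\NN$, so integrating over $\TT^d$ against $f(\sigma\cdot)$ equals integrating $f$ over $\TT^d$); the same argument applies to $\mathbf{\Omega}_k$ and $\bwk$, noting $\int\phi=0$ again for $\bwk$. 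For item~2: $\widetilde W_k$ points in the fixed direction $\ek$ and depends only on $x_k'$, so $\D\widetilde W_k = \p_{x_k}(\mu^{d-1}\phi(\mu x_k')) = 0$; likewise $\bp_k\bwk$ is a scalar multiple of $\ek$ depending only on $x_k'$, so its divergence vanishes too, and these identities survive periodic extension and the $x\mapsto\sigma x$ rescaling. The relation \eqref{eq:Div_Omega_k_close_Phi_k} is the chain rule: $\D\mathbf{\Omega}_k(x) = \D\big(\Omega_k(\sigma x)\big) = \sigma (\D\Omega_k)(\sigma x) = \sigma \Phi_k(\sigma x) = \sigma\bp_k(x)$, using $\D\widetilde\Omega_k = \mu^{-1}\cdot\mu\,(\D\Omega)(\mu x_k') = \phi(\mu x_k') = \widetilde\Phi_k$.

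\textbf{The $L^p$ and $W^{s,p}$ estimates (item 3).} These follow from a single change-of-variables computation. For a fixed profile $F\in C_c^\infty(\RR^{d-1})$ supported in $(0,1)^{d-1}$, the $1$-periodic extension of $F(\mu x_k')$ restricted to $\TT^d$ satisfies $\|F(\mu\,\cdot\,_k')\|_{L^p(\TT^d)}^p = \int_{(0,1)^{d-1}}|F(\mu y)|^p\,dy = \mu^{-(d-1)}\|F\|_{L^p(\RR^{d-1})}^p$ (the pipe has thickness $\sim\mu^{-1}$ in each of the $d-1$ transverse directions and is constant in $x_k$), so $\|F(\mu\,\cdot\,_k')\|_{L^p} \sim \mu^{-(d-1)/p}$; each transverse derivative brings down a factor $\mu$. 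Then the rescaling by $\sigma$: $\|g(\sigma\cdot)\|_{L^p(\TT^d)} = \|g\|_{L^p(\TT^d)}$ for $\sigma\in\NN$, while each derivative of $g(\sigma\cdot)$ costs a factor $\sigma$. Applying this with $F=\mu^{-1}\Omega$ (so an extra $\mu^{-1}$) gives \eqref{eq:BB_Bounds_Omega}; with $F=\phi$ and combining the $\mu$ from transverse derivatives with the $\sigma$ from the outer rescaling gives $\|\bp_k\|_{W^{s,p}}\lesssim (\sigma\mu)^s\mu^{-(d-1)/p}$, i.e. \eqref{eq:BB_Bounds_Phi}; with $F=\mu^{d-1}\phi$ the normalization factor $\mu^{d-1}$ combines with $\mu^{-(d-1)/p}$ to give $\mu^{(d-1)(1-1/p)}$, i.e. \eqref{eq:BB_Bounds_W}. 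One small point to be careful about is that $W^{s,p}$ for non-integer $s$ should be handled by interpolation (or just interpreting $s$ as an integer as it is used in the paper); I would simply interpolate between consecutive integers since the bounds are multiplicative in $\sigma\mu$.

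\textbf{The self-interaction identity (item 4).} This is where the normalization $\int_{\RR^{d-1}}\phi^2 = 1$ is used. Compute $\fint_{\TT^d}\bp_k\bwk\,dx = \fint_{\TT^d}\Phi_k(\sigma x)W_k(\sigma x)\,dx = \fint_{\TT^d}\Phi_k W_k\,dx$ (rescaling, $\sigma\in\NN$), and $\Phi_k W_k$ is the $1$-periodic extension of $\mu^{d-1}\phi(\mu x_k')^2\,\ek$; integrating over a fundamental domain, the $x_k$-integral is trivial (integrand independent of $x_k$) and $\int_{(0,1)^{d-1}}\mu^{d-1}\phi(\mu y)^2\,dy = \int_{\RR^{d-1}}\phi(z)^2\,dz = 1$, giving exactly $\ek$. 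I do not anticipate any genuine obstacle here — the only thing requiring care throughout is consistent bookkeeping of the two independent scales $\mu$ (transverse concentration) and $\sigma$ (global oscillation) and remembering that $\sigma\in\NN$ is what makes the outer rescaling an isometry on every $L^p(\TT^d)$ and preserves periodicity.
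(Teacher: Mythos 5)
Your proof is correct and takes essentially the same route as the paper's (very terse) proof: items 1, 2 and 4 by direct computation from the definitions and the normalization \eqref{eq:def_normlazied_phi}, and item 3 from the $\mu^{-1}$-thin cylindrical supports via change of variables for integer $s$ plus interpolation for general $s\ge 0$. The one point you elide is the zero mean of $\mathbf{\Omega}_k$, which does not follow from $\D\Omega=\phi$ alone but needs $\int_{\RR^{d-1}}\Omega=0$ (e.g.\ take $\Omega=\nabla\psi$ with $\phi=\Delta\psi$, $\psi\in C_c^\infty$); the paper's proof is equally silent on this, and the property is not actually used elsewhere.
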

\begin{proof}
The first two points are direct consequences of the definitions while the last point follows from \eqref{eq:def_normlazied_phi}.  

When $s=0$, the bounds \eqref{eq:BB_Bounds_Omega}--\eqref{eq:BB_Bounds_W} follow from the small supports of the non-periodic objects $ \widetilde \Phi_k , \widetilde \Omega_k  ,\widetilde W_k $: the support set is a cylinder of radius $\sim \mu^{-1}$ and length $1$. The general case $s>0$ can be obtained by interpolation between the cases $s\in \NN$.

\end{proof}

%%%%%%%%%%%%%%%%%%%%%%%%%%%%%%%%%%%%%%%%%%
\subsection{Density and velocity perturbations} \label{subsec:theta_w_definition}
%%%%%%%%%%%%%%%%%%%%%%%%%%%%%%%%%%%%%%%%%%%

Here we define perturbations $(\theta , w)$ given a defect field $R$ as in Proposition~\ref{prop:main_prop}.

Recall that the concentration parameters $\mu,\kappa \geq 1 $ and the oscillation parameter $\sigma \in \NN$ introduced so far will be specified in Lemma \ref{lemma:parameters} below.  The velocity perturbation  is   defined by
\begin{align}\label{eq:def_w}
w    &:=         \sum_{1\leq k \leq d}   g_{k}   \bw_k.
\end{align}

For the density perturbation, first we decompose the defect field 
\begin{equation}
R(x,t)  = \sum_{1\leq k \leq d} R_k(x,t) \mathbf{e}_k,
\end{equation}
where $\ek$'s are the standard Euclidean basis as before. We define the density  perturbation as the sum of the zero-mean projection of the principal part and a small oscillation correction:
\[
\theta =  \mathbb{P}_{\ne 0} \theta_p+\theta_o, 
\]
where $\mathbb{P}_{\ne 0} f = f -\fint f$ is the projection removing the spatial mean, and 
\begin{align}
\theta_p  &:=    -   \sum_{1\leq k \leq d} \widetilde{g}_{k}  R_k   \bp_k,  \label{eq:def_theta_p} \\
\theta_o  &  = \sigma^{-1}  \D \sum_{1\leq k \leq d}  h_k R_k    \ek. \label{eq:def_theta_o_2}
\end{align}

Note that $\D w=0$ for all $t$ since $\bw_k$ is divergence-free, which also implies that
\[
\D\big([\mathbb{P}_{\ne 0}\theta_p] w \big) = \D(\theta_p w ). 
\]

By definitions,  $\Supp_t \theta \subset  \Supp_t R$ as required in \eqref{eq:support_of_theta} of Proposition~\ref{prop:main_prop}.

%%%%%%%%%%%%%%%%%%%%%%%%%%%%%%%%%%%%%%%%%%%%%%%%
\section{Estimates of the density and velocity perturbations} \label{sec:Estimates_on_perturbation}
%%%%%%%%%%%%%%%%%%%%%%%%%%%%%%%%%%%%%%%%%%%%%%%%

The goal of this section is to obtain estimates \eqref{eq:main_prop_0}, \eqref{eq:main_prop_1}, and \eqref{eq:main_prop_2} on $\theta$ and $w$ claimed  in Proposition~\ref{prop:main_prop}.

%%%%%%%%%%%%%%%%%%%%%%%%%%%%%%%%%%%%%%%%%%%%%%%%
\subsection{Choice of parameters}
%%%%%%%%%%%%%%%%%%%%%%%%%%%%%%%%%%%%%%%%%%%%%%%%

Now we specify all the oscillation and concentration parameters in the perturbation as explicit powers of a large frequency number $\lambda>0$ that will be fixed in the end.

\begin{enumerate}
    \item Oscillation $\sigma  \in \NN $:
    \begin{align*} 
    \sigma & = \lceil \l^{ 2\gamma } \rceil.  
    \end{align*}
    Without loss of generality, we only consider values of $\l$ such that $\sigma = \l^{2 \gamma } \in \NN $ in what follows.

    \item Concentration $\kappa   ,\mu \geq 1$:
    \begin{align*} 
    \mu &= \l\\
    \kappa &=    \l^{\frac{d  - 2 \gamma  }{\alpha}}.
    \end{align*}

\end{enumerate}

\begin{lemma} \label{lemma:parameters}
For any $ p\in \NN$, there exist constants $\alpha>0$,   $0<\gamma < 1/4$, and $r>1$ such that the following holds,
\begin{align}
    (\sigma\mu)^p\kappa^{\alpha - \frac1p} &\leq \l^{-\gamma} \qquad (\theta_p \in L^p_t C^p),\label{eq:condition_theta_p_Lq_Linf}\\
  \kappa^{-\alpha}    (\sigma \mu)^1 \mu^{(d-1)(1 -\frac{1}{p})} &\leq \l^{-\gamma} \qquad (w  \in L^1_t W^{1,p}),\label{eq:condition_w_L1_W_1p}\\
    \kappa^{ \alpha}   \mu^{-1 -\frac{d-1}{r}} &\leq \l^{-\gamma} \qquad (\text{acceleration error}).\label{eq:condition_acc_erro}
\end{align}
 
\end{lemma}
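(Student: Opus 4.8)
The plan is to verify the three inequalities in Lemma~\ref{lemma:parameters} by substituting the explicit powers of $\lambda$ and reducing each one to an elementary inequality among the exponents $\alpha,\gamma,r$; the key is to choose the parameters in the right order so that each choice only constrains the ones made afterwards. Recall the assignments $\sigma=\lambda^{2\gamma}$, $\mu=\lambda$, $\kappa=\lambda^{(d-2\gamma)/\alpha}$, so that $\kappa^\alpha = \lambda^{d-2\gamma}$ and $\kappa^{-\alpha}=\lambda^{-(d-2\gamma)}$, while $\sigma\mu = \lambda^{1+2\gamma}$. With these substitutions the three conditions become, after taking $\log_\lambda$ of both sides:
\begin{align*}
p(1+2\gamma) + (d-2\gamma)\Bigl(1-\tfrac{1}{\alpha p}\Bigr) &\leq -\gamma \qquad (\theta_p),\\
-(d-2\gamma) + (1+2\gamma) + (d-1)\Bigl(1-\tfrac1p\Bigr) &\leq -\gamma \qquad (w),\\
(d-2\gamma) - 1 - \tfrac{d-1}{r} &\leq -\gamma \qquad (\text{acc.}).
\end{align*}

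First I would treat the second inequality, the one governing $w\in L^1_tW^{1,p}$: its left-hand side equals $(1+2\gamma) + (d-1) - (d-2\gamma) - (d-1)/p = 2\gamma + 1 - \tfrac{d-1}{p}$, so the condition reads $3\gamma \leq \tfrac{d-1}{p} - 1$. Since $d\geq 2$ this is vacuous unless $p\geq 2$; in fact we need $p < d-1$ or, more honestly, we need to be cleverer --- and this is exactly why $p$ cannot be taken arbitrarily large \emph{before} fixing the other parameters. Here is where $\alpha$ enters: the point of the paper's scheme is that for fixed $p$ one takes $\gamma$ tiny and then $\alpha$ tiny, and the concentration $\kappa=\lambda^{(d-2\gamma)/\alpha}$ blows up as $\alpha\to 0$, which is what makes the first inequality workable. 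Let me redo it: the first inequality, divided by nothing, reads $p(1+2\gamma) + (d-2\gamma) - \tfrac{d-2\gamma}{\alpha p} \leq -\gamma$, i.e.
\[
\frac{d-2\gamma}{\alpha p} \;\geq\; p(1+2\gamma) + (d-2\gamma) + \gamma.
\]
For fixed $p$ and fixed small $\gamma$ the right-hand side is a fixed number, while the left-hand side tends to $+\infty$ as $\alpha\to 0^+$; so this inequality is satisfied for all sufficiently small $\alpha>0$. That is the mechanism.

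So the order of choices I would carry out is: (i) given $p\in\NN$, first choose $\gamma\in(0,1/4)$ small enough that the $w$-inequality $3\gamma\leq \tfrac{d-1}{p}-1$ holds --- wait, this forces $p\leq d-1$, which contradicts ``arbitrarily large $p$''. The resolution must be that I am misreading the exponent bookkeeping: the genuine constraint is milder because $\mu=\lambda$ and $\mu^{(d-1)(1-1/p)}$ is being \emph{beaten} by the large negative power $\kappa^{-\alpha}=\lambda^{-(d-2\gamma)}$ only when $d > (1+2\gamma)+(d-1)(1-1/p)+\gamma$, i.e. $(d-1)/p > 1+3\gamma-1 = 3\gamma$ after simplification $d - (1+2\gamma) - (d-1)(1-1/p) = (d-1)/p - 2\gamma \ge \gamma$. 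So indeed $(d-1)/p \ge 3\gamma$, which is fine for \emph{any} $p$ as long as $\gamma$ is small enough (choose $\gamma < (d-1)/(3p)$). Good --- I had conflated $1$ with $0$. So the corrected recipe: (i) given $p$, pick $\gamma>0$ with $\gamma < (d-1)/(3p)$ and $\gamma<1/4$, which handles the $w$-inequality; (ii) with $\gamma$ now fixed, pick $r>1$ close enough to $1$ that $(d-1)/r > (d-1) - (1-\gamma) + (d-2\gamma) - d$... more simply, the acceleration inequality reads $(d-1)/r \geq (d-2\gamma)-1+\gamma = d-1-\gamma$, and since $d-1-\gamma < d-1 = (d-1)/1$, continuity gives an $r>1$ that works; (iii) finally, with $p,\gamma,r$ all fixed, choose $\alpha>0$ small enough that $\tfrac{d-2\gamma}{\alpha p} \geq p(1+2\gamma)+(d-2\gamma)+\gamma$, which is possible since the left side $\to\infty$ as $\alpha\to 0$. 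Then all three inequalities hold simultaneously. The main obstacle, and the only subtle point, is precisely the necessity of this \emph{sequential} choice --- $\gamma$ before $r$ before $\alpha$ --- since the first (density) inequality only becomes true in the limit $\alpha\to 0$, and one must be sure that shrinking $\alpha$ does not disturb the other two inequalities (it does not, because $\alpha$ appears nowhere in them once $\gamma$ is fixed: $\kappa^{\pm\alpha}=\lambda^{\pm(d-2\gamma)}$ is $\alpha$-free). I would also remark that $d-2\gamma>0$ is guaranteed since $d\geq 2$ and $\gamma<1/4$, so $\kappa\geq 1$ as required, and all exponents of $\lambda$ appearing are positive so the parameters are genuinely large for large $\lambda$.
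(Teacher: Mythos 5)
Your proposal follows exactly the paper's route: substitute the explicit powers of $\lambda$, reduce each condition to an inequality among exponents, and choose $\gamma$ small, then $r$ close to $1$, then $\alpha$ small (the order of $r$ versus $\alpha$ is immaterial since, as you correctly observe, $\kappa^{\pm\alpha}=\lambda^{\pm(d-2\gamma)}$ is $\alpha$-free). The structure is sound and the key mechanism --- that the first inequality is rescued by letting $\alpha\to 0^+$ while the other two are untouched --- is exactly the paper's.

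One arithmetic slip worth fixing: in the $w$-condition the left-hand side simplifies to $4\gamma-\tfrac{d-1}{p}$ (you must keep the $-2\gamma$ coming from $\kappa^{-\alpha}=\lambda^{-(d-2\gamma)}$, which you dropped in your ``corrected'' computation), so the constraint is $\tfrac{d-1}{p}\geq 5\gamma$, matching the paper, not $3\gamma$. Consequently your explicit prescription $\gamma<\tfrac{d-1}{3p}$ is not sufficient as stated (e.g.\ $\gamma=\tfrac{d-1}{4p}$ would violate the true constraint); replacing it by $\gamma\leq\tfrac{d-1}{5p}$ (or simply ``$\gamma$ small enough'') repairs this, and nothing else in the argument is affected.
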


\begin{proof} 

We first fix $\gamma>0$. Condition \eqref{eq:condition_w_L1_W_1p} in terms of power of $\l$ reads
\[
  \frac{d-1}{p}     \geq  5 \gamma.
\]
Since $p<\infty$, this condition is satisfied for $0< \gamma< \frac{1}{4} $ sufficiently small.
Expressing \eqref{eq:condition_theta_p_Lq_Linf} in terms of power of $\l$ gives $ \alpha \leq \frac{1}{p} \frac{d- 2 \gamma}{ ( 2p\gamma + p +d  -  \gamma)}$. Since $0<\gamma< 1/4$, this condition on $\alpha $ is automatically satisfied when $\alpha< \frac{d -1/2}{  2 p^2 + 2 d p  } $. We then fix $\alpha>0$ according to this condition.

For condition~\eqref{eq:condition_acc_erro}, taking $r=1$, the left-hand side becomes
\[
  \l^{d - 2\gamma -1 -d +1  } =\l^{-2 \gamma}  .
\]
Therefore, by continuity, \eqref{eq:condition_acc_erro} holds for $r>1$ close enough to $1$.
\end{proof}

%%%%%%%%%%%%%%%%%%%%%%%%%%%%%%%%%%%%%%%%%%%%%%%%%
\subsection{Estimates for the perturbations}
%%%%%%%%%%%%%%%%%%%%%%%%%%%%%%%%%%%%%%%%%%%%%%%%%

In what follows, $C_R$ will stand for a large constant that only depends on the triple $(\rho,u,R)$ provided as the input by Proposition~\ref{prop:main_prop}. It is important that $C_R$ can \textbf{never} depend on the free parameters $\sigma, \mu $ and $\kappa$ in the building blocks that we used to define $ \theta$ and $w$. 

\begin{lemma}[Estimate of $\theta$]\label{lemma:theta_estimate}
The density perturbation $\theta$ satisfies
\begin{equation*}
\|\theta  \|_{L^p_t C^p} \leq  C_R \lambda^{-\gamma}.
\end{equation*}
 
\end{lemma}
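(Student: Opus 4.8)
The plan is to estimate $\theta = \mathbb{P}_{\ne 0}\theta_p + \theta_o$ by controlling the two pieces separately, with the principal part $\theta_p$ carrying the main term and the oscillation corrector $\theta_o$ being comparatively harmless thanks to the factor $\sigma^{-1}$. Since $\mathbb{P}_{\ne 0}$ is bounded on $C^p$ (it only subtracts a constant), it suffices to bound $\|\theta_p\|_{L^p_t C^p}$ and $\|\theta_o\|_{L^p_t C^p}$.

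For the principal part, recall $\theta_p = -\sum_{k} \widetilde g_k R_k \bp_k$. The temporal factor $\widetilde g_k$ depends only on $t$ and the Mikado density $\bp_k$ only on $x$, so I would first separate the space and time norms: for each fixed $t$,
\begin{equation*}
\|\widetilde g_k(t) R_k(t) \bp_k\|_{C^p} \leq |\widetilde g_k(t)|\, \|R_k(t)\|_{C^p}\, \|\bp_k\|_{C^p} \lesssim |\widetilde g_k(t)|\, \|R_k(t)\|_{C^p}\, (\sigma\mu)^p \mu^{-\frac{d-1}{\infty}},
\end{equation*}
using \eqref{eq:BB_Bounds_Phi} with $p=\infty$ there, i.e. $\|\bp_k\|_{W^{p,\infty}} \lesssim (\sigma\mu)^p$ (Leibniz handles the product of $R_k$ and $\bp_k$ in $C^p$ up to a constant). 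Then I take the $L^p_t$ norm in time: since $R$ (hence $R_k$ and all its spatial derivatives) is smooth on the compact set $[0,1]\times\TT^d$, $\sup_t \|R_k(t)\|_{C^p} \leq C_R$, and $\|\widetilde g_k\|_{L^p_t} \sim \kappa^{\alpha - 1/p}$ by \eqref{eq:estimates_g}. This yields $\|\theta_p\|_{L^p_t C^p} \lesssim C_R (\sigma\mu)^p \kappa^{\alpha - 1/p}$, which is $\leq C_R \lambda^{-\gamma}$ by \eqref{eq:condition_theta_p_Lq_Linf} of Lemma~\ref{lemma:parameters}.

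For the oscillation corrector $\theta_o = \sigma^{-1}\D\sum_k h_k R_k \ek = \sigma^{-1}\sum_k h_k \partial_k R_k$ (since $h_k$ is a function of $t$ only, the spatial divergence lands on $R_k$). Here $\|h_k\|_{L^\infty_t}\leq 1$ by \eqref{eq:def_h_mu_t_L_1} and $\sup_t\|\partial_k R_k(t)\|_{C^p} \leq C_R$, so $\|\theta_o\|_{L^p_t C^p} \leq \sigma^{-1} C_R \lesssim \lambda^{-2\gamma} C_R$, which is even smaller. Combining the two bounds and absorbing constants into $C_R$ gives the claim. The only mild subtlety — not really an obstacle — is making sure the Leibniz expansion of $\|R_k \bp_k\|_{C^p}$ is dominated by the top-order term $\|R_k\|_{C^0}\|\bp_k\|_{C^p}$ times a combinatorial constant, and that the constant $C_R$ genuinely does not see $\sigma,\mu,\kappa$; this is automatic since every derivative that could fall on $\bp_k$ only improves the bound (lower-order derivatives of $\bp_k$ are bounded by $(\sigma\mu)^j \leq (\sigma\mu)^p$), and the $R_k$-side derivatives are all controlled by the fixed constant $C_R$ depending only on $(\rho,u,R)$.
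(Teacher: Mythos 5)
Your proof is correct and follows essentially the same route as the paper's: bound $\|\theta_p\|_{L^p_tC^p}$ by $C_R(\sigma\mu)^p\kappa^{\alpha-1/p}$ using the algebra property of $C^p$, \eqref{eq:estimates_g}, \eqref{eq:BB_Bounds_Phi}, and condition \eqref{eq:condition_theta_p_Lq_Linf}, then control $\theta_o$ by $C_R\sigma^{-1}$ via \eqref{eq:def_h_mu_t_L_1}. Your extra remarks on the Leibniz expansion and the boundedness of $\mathbb{P}_{\ne 0}$ are fine but are exactly the details the paper absorbs into the algebra property of $C^p(\TT^d)$.
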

\begin{proof}
For the principle part $\theta_p$, since the space $C^p(\TT^d)$ is an algebra, using H\"older's inequality, \eqref{eq:estimates_g}, and \eqref{eq:BB_Bounds_Phi}, we obtain
\[
\begin{split}
\|\theta_p  \|_{L^p_t C^p} & \leq  \sum_{1\leq k \leq d} \|\widetilde{g}_{k}\|_{L^p} \|R_k\|_{L^\infty_t C^p }   \|\bp_k\|_{L^\infty_t C^p } \\
&\leq C_R    (\sigma\mu)^p \kappa^{\alpha - \frac{1}{p} }   \\
& \leq   C_R \lambda^{-\gamma},
\end{split}
\]
where the last inequality holds  due to condition \eqref{eq:condition_theta_p_Lq_Linf}.

For the temporal corrector $\theta_0$ defined in \eqref{eq:def_theta_o_2}, by H\"older's inequality and \eqref{eq:def_h_mu_t_L_1} we have
\begin{equation} \label{eq:theta_o_estimates}
\begin{split}
\| \theta_o \|_{L^\infty_t C^p} & \leq \sigma^{-1}  \sum_{1\leq k \leq d}\|  h_k \|_{L^\infty([0,1])}  \big\|     \D (R_k  \ek)\big\|_{L^\infty_t C^p} \\
&\leq C_R \sigma^{-1},  
\end{split}
\end{equation}
and the final bound holds by definition of $\sigma$.
\end{proof}

\begin{lemma}[Estimate on $w$]\label{lemma:w_estimate}
The velocity perturbation $w$ satisfies
\begin{align*}
\| w  \|_{L^1_t W^{1,p}} \lesssim \lambda^{-\gamma}.
\end{align*}
\end{lemma}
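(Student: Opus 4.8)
The plan is to estimate $w = \sum_{1 \leq k \leq d} g_k \bw_k$ in $L^1_t W^{1,p}$ by splitting the $W^{1,p}$ norm into the $L^p$ norm of $w$ itself and the $L^p$ norm of $\nabla w$, and treating each summand separately. Since the temporal functions $g_k$ and the spatial Mikado flows $\bw_k$ are independent variables (one depending on $t$, the other on $x$), Hölder's inequality in time separates the two factors cleanly: for each fixed $k$,
\begin{equation*}
\| g_k \bw_k \|_{L^1_t W^{1,p}} = \| g_k \|_{L^1([0,1])} \, \| \bw_k \|_{W^{1,p}}.
\end{equation*}

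First I would bound $\| g_k \|_{L^1([0,1])}$ using \eqref{eq:estimates_g} with $q = 1$, which gives $\| g_k \|_{L^1([0,1])} \sim \kappa^{1 - \alpha - 1} = \kappa^{-\alpha}$. Next I would bound $\| \bw_k \|_{W^{1,p}}$ using the building block estimate \eqref{eq:BB_Bounds_W} with $s = 1$, which gives $\| \bw_k \|_{W^{1,p}} \lesssim (\sigma \mu) \mu^{(d-1)(1 - \frac{1}{p})}$. Note the $W^{0,p} = L^p$ part is dominated by the $s=1$ part since $\sigma\mu \geq 1$, so it suffices to track the first-derivative term. Multiplying and summing over the finitely many $k = 1, \dots, d$ yields
\begin{equation*}
\| w \|_{L^1_t W^{1,p}} \lesssim \kappa^{-\alpha} (\sigma \mu) \mu^{(d-1)(1 - \frac{1}{p})}.
\end{equation*}

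Finally I would invoke condition \eqref{eq:condition_w_L1_W_1p} of Lemma \ref{lemma:parameters}, which says exactly that $\kappa^{-\alpha} (\sigma \mu)^1 \mu^{(d-1)(1 - \frac{1}{p})} \leq \lambda^{-\gamma}$, to conclude $\| w \|_{L^1_t W^{1,p}} \lesssim \lambda^{-\gamma}$. There is really no substantial obstacle here: the argument is a routine Hölder-and-count computation once the building block estimates of Theorem \ref{thm:Mikados} and the parameter balance of Lemma \ref{lemma:parameters} are in hand. The only mild subtlety — and the reason the scheme needs $p < \infty$ — is that the factor $\mu^{(d-1)(1-1/p)}$ grows with $p$, so the gain $\kappa^{-\alpha}$ from temporal concentration must be large enough to absorb it; this is precisely what the inequality $\frac{d-1}{p} \geq 5\gamma$ in the proof of Lemma \ref{lemma:parameters} guarantees, and it is the place where the construction would break at $p = \infty$.
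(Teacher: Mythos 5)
Your proof is correct and follows the paper's argument essentially verbatim: separate the time factor $\|g_k\|_{L^1}\sim\kappa^{-\alpha}$ from the space factor $\|\bw_k\|_{W^{1,p}}\lesssim(\sigma\mu)\mu^{(d-1)(1-1/p)}$ via \eqref{eq:estimates_g} and \eqref{eq:BB_Bounds_W}, sum over the finitely many $k$, and close with \eqref{eq:condition_w_L1_W_1p}. The extra remark about why the scheme requires $p<\infty$ is accurate but not needed for the lemma.
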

\begin{proof}
Using H\"older's inequality, \eqref{eq:estimates_g}, and \eqref{eq:BB_Bounds_W}, we obtain
\[
\begin{split}
\|  w   \|_{L^1_t W^{1,p}} & \leq      \sum_{1\leq k \leq d}   \|g_{k}\|_{L^1}  \|\bw_k\|_{W^{1,p }}\\
&\lesssim     \kappa^{-\alpha}    (\sigma \mu) \mu^{(d-1)(1 - \frac{1}{p})}.
\end{split}
\]
The conclusion holds thanks to \eqref{eq:condition_w_L1_W_1p}.

\end{proof}

\begin{lemma}[Estimate on $\theta w$]\label{lemma:theta_w_estimate}
The following estimate holds:
\begin{align*}
\| \theta w + \theta u + \rho w   \|_{L^1_{t,x}} \lesssim \|R\|_{L^1_{t,x}} + C_R\lambda^{-\gamma}.
\end{align*}
\end{lemma}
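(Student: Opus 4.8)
The plan is to estimate each of the three terms $\theta w$, $\theta u$, and $\rho w$ separately in $L^1_{t,x}$, using the product structure of the perturbations together with the improved H\"older inequality from the appendix (to handle products of functions oscillating at frequency $\sigma$ against slowly varying functions like $R_k$, $\rho$, $u$). The dominant term will be $\theta w$, and within it the interaction of the principal part $\mathbb{P}_{\ne0}\theta_p$ with $w$: here the key algebraic fact is the self-interaction identity \eqref{eq:(PtimesW)_self_interaction}, $\fint \bp_k \bwk = \ek$, combined with the temporal normalization \eqref{eq:g_theta_g_w_interaction}, $\int_0^1 \widetilde g_k g_k\,dt = 1$. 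These two identities are precisely what makes the ``principal $\times$ principal'' interaction reproduce $-R$ up to lower-order oscillatory errors, so that $\|\theta w + \text{(lower order)}\|_{L^1_{t,x}} \lesssim \|R\|_{L^1_{t,x}}$. Note we do not actually need exact cancellation of $R$ here (that happens in the defect-field section); we only need the crude bound $\lesssim \|R\|_{L^1_{t,x}}$, so it suffices to bound $\|\theta_p w\|_{L^1}$ by $\sum_k \|\widetilde g_k g_k\|_{L^1}\|R_k\|_{L^\infty}\|\bp_k\bwk\|_{L^1}\lesssim C_R$, which already has the right form (absorbing $\|R\|_{L^1}$ into the $C_R$ is fine, or one keeps track of it explicitly).

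Concretely, I would proceed as follows. First, for $\theta w$: write $\theta = \mathbb{P}_{\ne0}\theta_p + \theta_o$. For the principal piece, use $\|\theta_p w\|_{L^1_{t,x}} \le \sum_k \|\widetilde g_k g_k\|_{L^1([0,1])}\,\|R_k\|_{L^\infty_t C^0}\,\|\bp_k\|_{L^p}\|\bwk\|_{L^{p'}}$ — here the temporal factor is $O(1)$ by \eqref{eq:g_theta_g_w_interaction}, and the spatial factor $\|\bp_k\|_{L^p}\|\bwk\|_{L^{p'}}$ is $O(1)$ by \eqref{eq:BB_Bounds_Phi}–\eqref{eq:BB_Bounds_W} with $s=0$ (the $\mu$-powers $\mu^{-(d-1)/p}\cdot\mu^{(d-1)/p}$ cancel), so this term is $\le C_R$. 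For the corrector $\theta_o w$, use \eqref{eq:theta_o_estimates}: $\|\theta_o\|_{L^\infty_{t,x}} \le C_R\sigma^{-1}$, and $\|w\|_{L^1_t L^1} \lesssim \|w\|_{L^1_t W^{1,p}}\lesssim \lambda^{-\gamma}$ by Lemma~\ref{lemma:w_estimate}, giving a contribution $\le C_R\sigma^{-1}\lambda^{-\gamma}\le C_R\lambda^{-\gamma}$.

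Next, for $\theta u$: here $u$ is a fixed bounded smooth function, so $\|\theta u\|_{L^1_{t,x}} \le \|u\|_{L^\infty_{t,x}}\|\theta\|_{L^1_{t,x}}$, and by Lemma~\ref{lemma:theta_estimate} (with $p=1$, or just interpolating down) $\|\theta\|_{L^1_{t,x}} \le \|\theta\|_{L^p_t C^p}\le C_R\lambda^{-\gamma}$ since the time interval has length $1$; hence this term is $\le C_R\lambda^{-\gamma}$. Finally, for $\rho w$: $\rho$ is fixed and bounded, so $\|\rho w\|_{L^1_{t,x}} \le \|\rho\|_{L^\infty_{t,x}}\|w\|_{L^1_{t,x}} \le C_R\|w\|_{L^1_t W^{1,p}}\le C_R\lambda^{-\gamma}$ by Lemma~\ref{lemma:w_estimate}. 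Summing the three estimates yields $\|\theta w + \theta u + \rho w\|_{L^1_{t,x}} \lesssim \|R\|_{L^1_{t,x}} + C_R\lambda^{-\gamma}$, as claimed (and in fact the $\|R\|_{L^1_{t,x}}$ term can be traded against $C_R$, which is why the statement of Proposition~\ref{prop:main_prop} writes $M\|R\|_{L^1_{t,x}}$ after choosing $\lambda$ large).

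The main obstacle, and the only place requiring care, is the $\theta_p w$ term: one must make sure the $\mu$-concentration powers genuinely cancel (principal density decays like $\mu^{-(d-1)/p}$ in $L^p$ while the velocity grows like $\mu^{(d-1)(1-1/p)} = \mu^{(d-1)/p'}$ in $L^{p'}$, so their product in $L^1$ is $\mu$-independent) and that the temporal factor $\widetilde g_k g_k = \kappa^\alpha\kappa^{1-\alpha}G_k(\sigma\cdot)^2 = \kappa G_k(\sigma\cdot)^2$ has $L^1([0,1])$-norm $O(1)$ — which is exactly \eqref{eq:g_theta_g_w_interaction} up to the $\|\widetilde G\|_{L^\infty}\le2$ bound, since $\int_0^1 \kappa G_k(\sigma t)^2\,dt$ equals $\int_0^1 \widetilde g_k g_k\,dt = 1$ by the normalization \eqref{eq:def_of_tilde_G}. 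Once these two $O(1)$ bounds are in hand, everything else is a routine application of H\"older and the already-established Lemmas~\ref{lemma:theta_estimate} and~\ref{lemma:w_estimate}.
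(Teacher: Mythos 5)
Your decomposition into the three terms and your treatment of $\theta_o w$, $\theta u$, and $\rho w$ are correct and match the paper. The gap is in the principal term $\theta_p w$: you bound it by $\sum_k\|\widetilde g_k g_k\|_{L^1}\|R_k\|_{L^\infty_t C^0}\|\bp_k\|_{L^p}\|\bwk\|_{L^{p'}}\le C_R$ and assert that ``absorbing $\|R\|_{L^1}$ into the $C_R$ is fine.'' It is not. The lemma's right-hand side is $\lesssim\|R\|_{L^1_{t,x}}+C_R\lambda^{-\gamma}$, where (as the paper stresses at the end of its computation) the implicit constant in front of $\|R\|_{L^1_{t,x}}$ is \emph{universal}, independent of $(\rho,u,R)$ and of $\lambda$, while the $R$-dependent constant only multiplies a factor that decays in $\lambda$. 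This structure is exactly what yields \eqref{eq:main_prop_2}, $\|\theta w+\theta u+\rho w\|_{L^1_{t,x}}\le M\|R\|_{L^1_{t,x}}$ with universal $M$ (one absorbs $C_R\lambda^{-\gamma}$ into $\|R\|_{L^1_{t,x}}$ by taking $\lambda$ large --- the opposite absorption from the one you propose), and that in turn is what makes $\rho_n u_n$ Cauchy in $L^1_{t,x}$ in the proof of Theorem~\ref{thm:main_thm}: the bound $\|\rho_{n+1}u_{n+1}-\rho_n u_n\|_{L^1_{t,x}}\le M\delta_{n-1}$ requires $M$ not to depend on $R_n$. A bound of the form $C_R$, with no $\lambda$-decay and no $\|R\|_{L^1_{t,x}}$ factor, cannot be converted into either of these, since $\|R\|_{L^1_{t,x}}$ may be much smaller than $\|R\|_{L^\infty}$.

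The fix is the tool you name in your opening plan but then drop in the execution: the improved H\"older inequality, Lemma~\ref{lemma:improved_Holder}, applied twice. In space, since $\bp_k\bwk$ is $\sigma^{-1}\TT^d$-periodic, one gets $\|R_k(t)\,\bp_k\bwk\|_{L^1}\lesssim\|\bp_k\bwk\|_{L^1}\bigl(\|R_k(t)\|_{L^1}+\sigma^{-1}\|R_k(t)\|_{C^1}\bigr)$; in time, since $\widetilde g_k g_k$ is $\sigma^{-1}$-periodic and $t\mapsto\|R_k(t)\|_{L^1}$ is smooth, $\int_0^1|\widetilde g_k(t) g_k(t)|\,\|R_k(t)\|_{L^1}\,dt\lesssim\|\widetilde g_k g_k\|_{L^1}\|R_k\|_{L^1_{t,x}}+\sigma^{-1}C_R$. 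Combining these with $\|\widetilde g_k g_k\|_{L^1}\sim1$ and $\|\bp_k\bwk\|_{L^1}\sim1$ gives $\|\theta_p w\|_{L^1_{t,x}}\lesssim\|R\|_{L^1_{t,x}}+C_R\sigma^{-1}$ with a universal implicit constant, which is the claimed estimate. Everything else in your write-up goes through.
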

\begin{proof}
                       Taking the $L^1$ norm in space and using Lemma~\ref{lemma:improved_Holder} and the fact that $\bp_k \bw_k$ is $\sigma^{-1}\TT^d$-periodic in space, we obtain
\[
\begin{split}
\| \theta(t) w (t) \|_{L^1} & \leq  \sum_{1\leq k \leq d}   |\widetilde g_k(t) g_{k}(t)|  \|R_k (t) \bp_k \bw_k\|_{L^1}\\
& \lesssim  \sum_{1\leq k \leq d}   |\widetilde g_k(t) g_{k}(t)|  \|\bp_k \bw_k\|_{L^1}\left(\|R_k(t)\|_{L^1} + \sigma^{-1}\|R_k(t) \|_{C^1} \right)\\
& \lesssim  \sum_{1\leq k \leq d}   |\widetilde g_k(t) g_{k}(t)|  \left(\|R_k (t)\|_{L^1} + \sigma^{-1}\|R_k\|_{C^1_{t,x}} \right),
\end{split}
\]
where we used $\|\bp_k \bw_k\|_{L^\infty_tL^1_{x}}\sim  1$ by \eqref{eq:BB_Bounds_Phi} and \eqref{eq:BB_Bounds_W} at the last step.
Now taking the $L^1$ norm in time, using Lemma~\ref{lemma:improved_Holder} together with $\sigma$-periodicity of $g_k(t) g_{k}(t)$ and the smoothness of $t \mapsto \|R_k (t)\|_{L^1}  $, and recalling that $\|g_k g_{k}\|_{L^1}\sim 1$, we arrive at
\[
\begin{split}
\| \theta w  \|_{L^1_{t,x}}
&\lesssim  \sum_{1\leq k \leq d}   \|\widetilde g_k g_{k}\|_{L^1}  \left(\|R_k\|_{L^1_{t,x}} + \sigma^{-1} C_R\right)\\
&\lesssim  \sum_{1\leq k \leq d}     \left(\|R_k\|_{L^1_{t,x}} + \sigma^{-1} C_R  \right)\\
&\lesssim \|R\|_{L^1_{t,x}} + C_R\sigma^{-1},
\end{split}
\]
where the implicit constant does not depend on the parameter $\l$ or the given solution $(\rho , u ,R)$.

The estimates for the other two terms $\theta u$ and $\rho w $ follow from Lemma \ref{lemma:theta_estimate} and Lemma  \ref{lemma:w_estimate}. Indeed, H\"older's inequality gives
\[
\begin{split}
\| \theta u  \|_{L^1_{t,x}}
&\leq \| \theta \|_{L^1_{t,x}} \| u  \|_{L^\infty_{t,x}} \\
&\leq C_R \l^{-\gamma}, 
\end{split}
\]
and
\[
\begin{split}
\| \rho w  \|_{L^1_{t,x}}
&\leq  \|  w  \|_{L^1_{t,x}} \| \rho    \|_{L^\infty_{t,x}} \\
&\leq C_R \l^{-\gamma}.
\end{split}
\]

\end{proof}

%%%%%%%%%%%%%%%%%%%%%%%%%%%%%%%%%%%%%%%%%%%
\section{The new defect field \texorpdfstring{$R_1$}{R1} and its estimates} \label{sec:New_defect_field}
%%%%%%%%%%%%%%%%%%%%%%%%%%%%%%%%%%%%%%%%%%%
 
We continue with the proof of Proposition~\ref{prop:main_prop}.
Our next goal is to define a suitable defect field $R_1$ such that the new density $\rho_1$ and vector field $u_1$,
$$
\rho_1 : = \rho + \theta, \qquad u_1 : = u+ w,
$$
solve the continuity-defect equation
\begin{equation}\label{eq:new_equation_R_1}
\p_t \rho_1 + u_1 \cdot \nabla \rho_1  = \D R_1.
\end{equation}
The defect field will consists of three parts 
\[
R_1 =  R_{\Osc}  + R_{\Lin} + R_{\Cor},
\]
each solving the corresponding divergence equation:
\begin{align*}
\D R_{\Osc}  & =  \p_t \theta +\D (\theta_p  w  + R ),\\
\D R_{\Lin}  & =   \D (\theta  u + \rho w),  \\
\D R_{\Cor}  & = \D  (\theta_o  w    ).
\end{align*}

So we define the linear error $R_{\Lin}   =\theta  u + \rho w $ and the correction error $ R_{\Cor} =\theta_o  w $ in the usual way and the oscillation error $R_{\Osc}$ in the the following important lemma. Recall that $\mathcal{R} $ and $\mathcal{B} $ are the antidivergence operators defined in Appendix \ref{sec:append_tools}.

%%%%%%%%%%%%%%%%%%%%%%%%%%%%%%%%%%%%%%%%%%
\subsection{Definition of the oscillation error}
%%%%%%%%%%%%%%%%%%%%%%%%%%%%%%%%%%%%%%%%%%
\begin{lemma}[Space-time oscillations]\label{lemma:convex_integration_space_time}
The following identity holds
\[
\p_t \theta + \D (\theta_p  w  + R ) =\D \big( R_{\text{osc,x}}+ R_{\text{osc,t}}  + R_{\Acc} \big),
\]
where $R_{\text{osc,x}} $ is the spatial oscillation error
$$
R_{\text{osc,x}} = -   \sum_{1\leq k \leq d} \widetilde{g}_{k} g_{k} \mathcal{B} \Big(  \nabla   R_k , \big(  \bp_k \bw_k  -\fint_{\TT^d}  \bp_k \bw_k  \big)  \Big)  ,
$$
$R_{\text{osc,t}} $ is the temporal oscillation error
\[
R_{\text{osc,t}} =  \sigma^{-1}    \sum_{1\leq k \leq d} h_k   \p_t  R_k \ek .
\]
$R_{\Acc}$ is the acceleration   error
\[
R_{\Acc} =    - \sum_{1\leq k \leq d}\mathcal{B}\big(\p_t(\widetilde g_k R_k), \bp_k\big).
\]
 
\end{lemma}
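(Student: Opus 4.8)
The plan is to verify the claimed identity by a direct computation, splitting $\p_t\theta$ according to the decomposition $\theta = \mathbb{P}_{\ne 0}\theta_p + \theta_o$ and matching terms against the three pieces of the right-hand side. First I would compute $\p_t\theta_p = -\sum_k \p_t(\widetilde g_k R_k)\bp_k = -\sum_k \widetilde g_k' R_k \bp_k - \sum_k \widetilde g_k \p_t R_k \bp_k$; the projection $\mathbb P_{\ne 0}$ only subtracts a time-dependent constant, which is killed by $\D$, so $\D$ applied to the perturbation part causes no trouble (this is the remark just before the lemma). The term $-\sum_k \widetilde g_k \p_t R_k \bp_k$ has spatial mean that need not vanish, but since $\bp_k = \sigma^{-1}\D\mathbf\Omega_k$ by \eqref{eq:Div_Omega_k_close_Phi_k} we can write $\widetilde g_k \p_t R_k \bp_k$ against $\mathcal B$; more efficiently, one writes $-\widetilde g_k \p_t R_k \bp_k = -\D\mathcal B(\p_t(\widetilde g_k R_k) - \widetilde g_k' R_k,\ \bp_k)$ after separating the $\widetilde g_k'$ piece, which is exactly how $R_{\Acc}$ and part of the oscillation term are produced. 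The cleanest bookkeeping is: $\p_t\theta_p = -\sum_k \widetilde g_k' R_k\bp_k - \sum_k \widetilde g_k\p_t R_k\bp_k$, and we will route the first sum into the spatial oscillation error and the second into $R_{\Acc}$.

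Next I would handle the divergence terms. For $\D(\theta_p w)$: by the definitions \eqref{eq:def_w} and \eqref{eq:def_theta_p}, and because the building blocks with different $k$ are designed so that only the diagonal interactions survive the space-time averaging, one gets $\theta_p w = -\sum_k \widetilde g_k g_k R_k\, \bp_k\bw_k$ (here it is essential that $\mathbf W_j$ and $\mathbf\Phi_k$ are all rescaled by the same $\sigma$ so the products are $\sigma^{-1}\TT^d$-periodic, but the cross terms in $k\ne k'$ do \emph{not} automatically vanish pointwise — they are simply retained, and the point is that $\D(\bp_k\bw_j)$-type corrections aren't needed because we only expand the $k=k'$ part and leave the rest; actually the standard move is that Mikado flows/densities are chosen so $\bp_k\bw_j$ for $j\ne k$ contributes only through lower-order antidivergence terms — I would follow the scheme of \cite{MR3884855} here). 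Writing $\bp_k\bw_k = \fint\bp_k\bw_k + (\bp_k\bw_k - \fint\bp_k\bw_k) = \ek + (\bp_k\bw_k - \ek)$ using \eqref{eq:(PtimesW)_self_interaction}, the constant part gives $-\sum_k \widetilde g_k g_k R_k\ek$, whose divergence is $-\sum_k \widetilde g_k g_k \D(R_k\ek) = -\sum_k \widetilde g_k g_k \p_{x_k}R_k$. The oscillatory part, since $\bp_k\bw_k - \fint\bp_k\bw_k$ is mean-zero and $\sigma^{-1}\TT^d$-periodic, is put under $\mathcal B$: $\D(\widetilde g_k g_k R_k(\bp_k\bw_k - \fint\bp_k\bw_k)) = \D\big[\widetilde g_k g_k \mathcal B(\nabla R_k, \bp_k\bw_k - \fint\bp_k\bw_k)\big] + \widetilde g_k g_k R_k\,\D(\bp_k\bw_k)$, and the last term vanishes since $\D(\bp_k\bw_k)=0$ by Theorem~\ref{thm:Mikados}(2). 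This produces exactly $R_{\text{osc,x}}$.

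Then I would combine the $\widetilde g_k'$ piece of $\p_t\theta_p$ with the constant part of $\D(\theta_p w)$. Recall $\widetilde g_k' R_k\bp_k$: using \eqref{eq:Div_Omega_k_close_Phi_k} one has $\bp_k = \sigma^{-1}\D\mathbf\Omega_k$, but the more elementary identity is $\bp_k\ek$-related — actually the intended cancellation uses the temporal corrector. Expanding $\D\theta_o$ from \eqref{eq:def_theta_o_2}: $\D\theta_o = \sigma^{-1}\sum_k \p_t(h_k R_k) \cdot(\text{something})$ — no; rather $\theta_o = \sigma^{-1}\D\sum_k h_k R_k\ek$ is already a divergence in space, so $\p_t\theta_o = \sigma^{-1}\D\sum_k \p_t(h_k R_k)\ek = \sigma^{-1}\D\sum_k (h_k'R_k + h_k\p_t R_k)\ek$. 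By \eqref{eq:def_h_mu_t}, $\sigma^{-1}h_k' = \widetilde g_k g_k - 1$, so $\sigma^{-1}h_k' R_k\ek = (\widetilde g_k g_k - 1)R_k\ek$, and $\D$ of this is $(\widetilde g_k g_k - 1)\p_{x_k}R_k + (\text{gradient hitting }\widetilde g_k g_k\text{, which is }0\text{ in }x)$ — wait, $\widetilde g_k,g_k$ depend only on $t$, so indeed $\D[(\widetilde g_k g_k-1)R_k\ek] = (\widetilde g_k g_k - 1)\p_{x_k}R_k$. Adding the $-\sum_k \widetilde g_k g_k\p_{x_k}R_k$ from the $w$-interaction: the $\widetilde g_k g_k\p_{x_k}R_k$ terms cancel, leaving $+\sum_k\p_{x_k}R_k = \D R$, which cancels the $-\D R$ hiding on the left (recall the LHS is $\p_t\theta + \D(\theta_p w + R)$, so we need to produce $+\D R$ from somewhere — this is it). Meanwhile $\sigma^{-1}\D\sum_k h_k\p_t R_k\ek = \sigma^{-1}\sum_k h_k\p_t R_k \p_{x_k}(1)$ — no: $\D(h_k\p_t R_k\ek) = h_k\p_{x_k}(\p_t R_k) = \p_t(h_k\p_{x_k}R_k) - h_k'\p_{x_k}R_k$... this is getting delicate; the cleanest statement is just $\p_t\theta_o$ contributes $\D$ of the vector $\sigma^{-1}\sum_k h_k\p_t R_k\ek$, and one declares $R_{\text{osc,t}} := \sigma^{-1}\sum_k h_k\p_t R_k\ek$ directly — no antidivergence needed since $\p_t\theta_o$ is already a spatial divergence of that plus the $h_k'$ part handled above. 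Finally the $\widetilde g_k'R_k\bp_k$ term and the $\widetilde g_k\p_t R_k\bp_k$ term together equal $\p_t(\widetilde g_k R_k)\bp_k$, and since $\bp_k$ has zero mean, $\p_t(\widetilde g_k R_k)\bp_k = \D\mathcal B(\p_t(\widetilde g_k R_k), \bp_k) + \p_t(\widetilde g_k R_k)\fint\bp_k$, with the last term zero; hence $-\sum_k\p_t(\widetilde g_k R_k)\bp_k = -\D\sum_k\mathcal B(\p_t(\widetilde g_k R_k),\bp_k) = \D R_{\Acc}$ by definition of $\mathcal B$ as a right inverse of $\D$ on mean-zero fields. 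Collecting everything gives the stated identity. The main obstacle is the careful bookkeeping of which mean-zero conditions permit application of $\mathcal B$ and tracking the precise role of the temporal corrector $\theta_o$ in generating the compensating $+\D R$; the algebra itself is routine once the routing is fixed.
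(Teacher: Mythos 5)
Your overall routing of terms ultimately coincides with the paper's proof (both $\widetilde g_k' R_k\bp_k$ and $\widetilde g_k\p_t R_k\bp_k$ into $R_{\Acc}$, the mean part of $\bp_k\bw_k$ cancelling against $\p_t\theta_o$ and $\D R$, the oscillatory part giving $R_{\text{osc,x}}$), but there is one genuine gap at the very first step. You assert $\theta_p w = -\sum_k \widetilde g_k g_k R_k\,\bp_k\bw_k$ and then hedge about the cross terms $k\ne k'$, guessing that they are handled either by spatial disjointness of the Mikado pipes or by ``lower-order antidivergence terms'' as in \cite{MR3884855}. Neither is available here: the paper explicitly does \emph{not} require the Mikado objects to have disjoint spatial supports, and no error term accounting for the interactions $\widetilde g_k g_{k'} R_k\,\bp_k\bw_{k'}$ appears in the statement of the lemma, so these products must vanish identically rather than merely ``survive averaging.'' The actual mechanism is temporal: the profiles $G_k$ are translates $\widetilde G(\kappa(t-t_k))$ with the $t_k$ chosen so that the $G_k$ have pairwise disjoint supports, hence $\widetilde g_k(t)\,g_{k'}(t)\equiv 0$ for $k\ne k'$ and the off-diagonal terms are zero pointwise in time. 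Without invoking this design feature, your computation of $\D(\theta_p w)$ does not go through.

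A smaller slip concerns the mean in the $R_{\Acc}$ step. You write $\p_t(\widetilde g_k R_k)\bp_k = \D\mathcal{B}\big(\p_t(\widetilde g_k R_k),\bp_k\big) + \p_t(\widetilde g_k R_k)\fint\bp_k$ and discard the last term because $\fint\bp_k=0$. But $\D\mathcal{B}(a,f)=af-\fint af$, and $\fint af\ne a\fint f$: the quantity $\fint \p_t(\widetilde g_k R_k)\bp_k\,dx$ is a generically nonzero function of $t$. What rescues the identity is that $\theta$ contains $\mathbb{P}_{\ne 0}\theta_p$ rather than $\theta_p$, so the term to be matched is $\p_t\mathbb{P}_{\ne 0}\theta_p=\mathbb{P}_{\ne 0}\p_t\theta_p$, and the projection removes exactly this spatial mean. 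Your opening remark that the projection ``is killed by $\D$'' is correct for the $\D(\theta_p w)$ term but irrelevant for the $\p_t\theta$ term, which is precisely where the projection is needed. (Your initial claim that the $\widetilde g_k'R_k\bp_k$ piece goes into the spatial oscillation error is also inconsistent with your own final accounting, where it correctly lands in $R_{\Acc}$.)
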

 
\begin{proof}

By definition of $\theta_p$ and $ w $, using the disjointedness of supports of $\widetilde g_k$, $g_{k'}$ for $k \neq k'$, we obtain
\begin{align}\label{eq:convex_integration_space_time_1}
\D (\theta_p  w ) &=   - \sum_{1\leq k \leq d} \widetilde{g}_{k}  g_{k}  \D\big(  R_k  \bp_k \bw_k   \big).
\end{align}
Thanks to $ \D (     \bp_k \bw_k  ) =0 $, for each $k$,
\begin{align*}
  \D\big(  R_k  \bp_k \bw_k   \big)  & =   \nabla  R_k  \cdot \mathbb{P}_{\ne 0} \big( \bp_k \bw_k     \big)  + \D ( R_k \ek ).
\end{align*}
such that from \eqref{eq:convex_integration_space_time_1} we have the decomposition
\begin{align}\label{eq:convex_integration_space_time_11}
\p_t \theta + \D (\theta_p  w +R ) &=   O_1 +O_2 + O_3,  
\end{align}
with 
\begin{align*}
O_1 &:=  \p_t \mathbb{P}_{\ne 0} \theta_p,   \\
O_2 &:=  - \sum_{1\leq k \leq d}  \widetilde{g}_k g_k   \nabla  R_k  \cdot \mathbb{P}_{\ne 0} \big( \bp_k \bw_k     \big),\\
O_3 &:= \p_t \theta_o  - \sum_{1\leq k \leq d}  \widetilde{g}_kg_k \D ( R_k \ek )  + \D R.
\end{align*}

By the definitions of $R_{\Acc}$ and $\mathcal{R}$, the first term $O_1 = \D R_{\Acc}$ since $\bp_k$ has zero mean.

For the second term $O_2$, by definition of $\mathcal{B}$ and \eqref{eq:bilinear_B_identity} we observe that
\begin{equation}
\fint_{\TT^d}  R_k   \D ( \bp_k \bw_k )    +  \nabla R_k  \cdot \big( \bp_k \bw_k  -\fint_{\TT^d} \bp_k \bw_k  \big)
 = \D \mathcal{B} \Big( \nabla R_k  , \big( \bp_k \bw_k  -\fint_{\TT^d} \bp_k \bw_k  \big)  \Big), \label{eq:convex_integration_space_time_2}
\end{equation}
where the meaning of the vector-valued argument is that $\mathcal{B}$ is applied to each of its component. So \eqref{eq:convex_integration_space_time_2} implies $O_2 = \D R_{\text{osc,x}}$. 

Finally, for the last term $O_3$, by the definition of $\theta_o$ \eqref{eq:def_theta_o_2}, \eqref{eq:def_h_mu_t}, and \eqref{eq:g_theta_g_w_interaction}, 
\begin{align*}
\p_t \theta_o  &  =  \sigma^{-1}  \sum_{1\leq k \leq d} h'_k  \D (  R_k \ek)    + \sigma^{-1}  \sum_{1\leq k \leq d} h_k  \D (  \p_t  R_k \ek)       \\
&  = \Big(  \widetilde{g}_{\kappa}   g_{\kappa}  -  1 \Big)\sum_{1\leq k \leq d}  \D (  R_k \ek) + \sigma^{-1}  \sum_{1\leq k \leq d} h_k  \D ( \p_t   R_k \ek), 
\end{align*}
which concludes $O_3 =  \D  R_{\text{osc,t}}.$

\end{proof}

%%%%%%%%%%%%%%%%%%%%%%%%%%%%%%%%%%%%%%%%%%
\subsection{Estimates of the new defect error}\label{subsec:estimate_R_1}
%%%%%%%%%%%%%%%%%%%%%%%%%%%%%%%%%%%%%%%%%%
In the remainder of this section, we finish the proof of Proposition~\ref{prop:main_prop}. Given $\delta>0$, we will show that the sum of  $L^1_{t,x}$ norms of each error is less than $C_R \lambda^{-\gamma}$. This concludes the proof provided $\lambda$ is chosen large enough.

\subsubsection{$R_{\Acc}$ estimate:}
Taking advantage of the potential $ \mathbf{\Omega}_k$ as in \eqref{eq:Div_Omega_k_close_Phi_k}, we obtain
\begin{equation}\label{eq:proof_R_tem_1}
\begin{split}
 \| R_{\Acc} \|_{L^1_{t,x}} &=  \sigma^{-1}   \big\|  \mathcal{B}  \big( \p_t (\widetilde{g}_{k} R_k) , \D \mathbf{\Omega}_k    \big)     \big\|_{L^1_{t,x}}\\
  \scriptstyle{(\text{by Lemma \ref{lemma:cheapbound_B} })}      & \lesssim C_R \sigma^{-1}  \| \widetilde{g}_{k} \|_{W^{1,1}}  \big\| \mathcal{R}   \D\mathbf{\Omega}_k  \big\|_{  L^1} \\
  \scriptstyle{(\text{by  boundedness of $\mathcal{R}$ in $L^r$ })}    & \lesssim C_{R}  \sigma^{-1}   \| \widetilde{g}_{k} \|_{W^{1,1}} \|\mathbf{\Omega}_k \|_{ L^r}  \\
 \scriptstyle{(\text{by \eqref{eq:estimates_g} and \eqref{eq:BB_Bounds_Omega}})} & \lesssim C_{R}     \kappa^{\alpha}    \mu^{  -1 -\frac{ d -1}{r}} \\
\scriptstyle{(\text{by \eqref{eq:condition_acc_erro}})} &\lesssim C_R \lambda^{-\gamma}.
\end{split}
\end{equation}

\subsubsection{$R_{\text{osc,x}}$ estimate:}
 
By H\"older's inequality, Lemma~\ref{lemma:cheapbound_B}, and the bounds $\|\widetilde{g}_{k}  g_{k}\|_{L^\infty_t L^1} \sim 1$ and $\|   \bp_k \bw_k  \|_{L^1} \sim 1$ we obtain
\[
\begin{split}
\|    R_{\text{osc,x}}  \|_{L^1_{t,x}} &\leq  \sum_{1\leq k \leq d} \|    \widetilde{g}_{k}  g_{k} \|_{L^1}  \big\| \mathcal{B} \big( \nabla R_k  ,   \mathbb{P}_{\ne 0 } ( \bp_k \bw_k)  \big)    \big\|_{L^\infty_t L^1}\\ 
&\lesssim C_R\sum_{1\leq k \leq d}    \| \mathcal{R}    \mathbb{P}_{\ne 0 } ( \bp_k \bw_k)   \|_{L^\infty_t L^1}\\
&  \leq   C_R   \l^{-\gamma}.
\end{split}
\]

\subsubsection{$R_{\text{osc,t}}$ estimate:}

By \eqref{eq:def_h_mu_t_L_1},  
\[
\begin{split}
\|R_{\Osc,t} \|_{L^1_{t,x}} &= \Big\|\sigma^{-1} \sum_{1\leq k \leq d} h_k   \p_t  R_k \ek \Big\|_{L^1_{t,x}} \\
&\leq C_R \sigma^{-1}\sum_{1\leq k \leq d}   \|h_k \|_{L^1 } \\
&\leq  C_R\l^{-\gamma}.
\end{split}
\]

\subsubsection{$R_{\Lin}$ estimate:}

We start with H\"older's inequality
\begin{align*}
\big\|R_{\Lin}  \big \|_{L^1_{t,x}}   \leq\| \theta   \|_{L^1_{t,x}}    \| u \|_{L^\infty_{t,x}} +     \| \rho   \|_{L^\infty_{t,x}}    \| w \|_{L^1_{t,x}}.
\end{align*}
So it suffices to show $ \| \theta \|_{L^1_{t,x}} \leq C_R \l^{-\gamma } $ and $\| w  \|_{L^1_{t,x} } \leq C_R \l^{-\gamma } $. These follow from Lemma \ref{lemma:theta_estimate} and Lemma \ref{lemma:w_estimate} since $p\geq 1$.
 
\subsubsection{$R_{\Cor}$ estimate:}

By H\"older's inequality, 
\begin{align*}
\| R_{\Cor}   \|_{L^1_{t,x}} \leq \| \theta_o       \|_{L^\infty_{t,x}}\|  w    \|_{L^1_{t,x}}.
\end{align*}
Since $\| \theta_o       \|_{L^\infty_{t,x}} \leq C_R \l^{- \gamma } $ from its definition (or by  \eqref{eq:theta_o_estimates} from Lemma \ref{lemma:theta_estimate}), by Lemma \ref{lemma:w_estimate}  we also have $\| R_{\Cor}   \|_{L^1_{t,x}}  \leq C_R \l^{-\gamma}$.

%%%%%%%%%%%%%%%%%%%%%%%%%%%%%%%%%%%%%%%%%%
\subsection{Conclusion of the proof of Proposition \ref{prop:main_prop}}
%%%%%%%%%%%%%%%%%%%%%%%%%%%%%%%%%%%%%%%%%%
 The first point is proved in Section \ref{sec:temporal_BB_perturbations} while the second point in Section \ref{sec:Estimates_on_perturbation} provided $\l$ is sufficiently large. For the last point, \eqref{eq:support_of_R_1} follows from the definition of the new defect error $R_1$, and the estimate follows from the ones in Section \ref{subsec:estimate_R_1} by choosing $\l$ sufficiently large once again. Hence Proposition \ref{prop:main_prop} is proved.

\section{Extension to transport-diffusion equation}\label{sec:transport-diffusion}

In this section, we extend the main results to general transport-diffusion equations
\begin{equation}\label{eq:the_a-d-equation}
\begin{cases}
\partial_t \rho - L\rho + u \cdot \nabla  \rho =0 &\\
\rho|_{t=0} =\rho_0,
\end{cases}
\end{equation}
where $L$ is a given constant coefficient differential operator of order $k\in \NN$. Weak solutions to \eqref{eq:the_a-d-equation} can be defined analogously to \eqref{eq:def_weak_solutions} by the adjoint of $L$. The following nonuniquness result holds for \eqref{eq:the_a-d-equation}.
\begin{theorem}\label{thm:zero_viscosity_limit}

Let $d\geq 2$ and  $L$ be any constant coefficient differential operator of order $k \geq 1$. There exists a divergence free velocity vector field  $u\in L^1(0,T;W^{1,p}(\TT^d))$ for all $p< \infty $, such that the the uniqueness of \eqref{eq:the_a-d-equation} fails in the class 
$$ 
\rho \in \bigcap_{\substack{
  p<\infty  \\ k\in \NN}} L^p(0,T; C^k(\TT^d)).
$$

\end{theorem}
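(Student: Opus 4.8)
The plan is to rerun the convex-integration scheme of Sections~\ref{sec:temporal_BB_perturbations}--\ref{sec:New_defect_field} essentially unchanged, absorbing the diffusion term $L\rho$ into the defect field. Concretely, I would replace the continuity-defect equation \eqref{eq:defect_equation} by
\[
\partial_t\rho - L\rho + \D(\rho u) = \D R,
\]
so that a (smooth) weak solution of \eqref{eq:the_a-d-equation} corresponds to $R\equiv 0$, and prove the verbatim analogue of Proposition~\ref{prop:main_prop} for it, under the extra (harmless) standing assumption that the integer $p$ in the proposition satisfies $p\ge k$, where $k$ is the order of $L$. The density and velocity perturbations $\theta=\mathbb{P}_{\ne 0}\theta_p+\theta_o$ and $w$ are taken exactly as in Section~\ref{subsec:theta_w_definition}, so the estimates \eqref{eq:main_prop_0}--\eqref{eq:main_prop_2} and the support property \eqref{eq:support_of_theta} are supplied unchanged by Lemmas~\ref{lemma:theta_estimate}, \ref{lemma:w_estimate}, and \ref{lemma:theta_w_estimate}.

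The only genuinely new point is the new defect field. Subtracting the modified equation for $(\rho,u,R)$ from the one for $(\rho+\theta,u+w,R_1)$ and using $\D w=0$, one finds that $R_1$ must satisfy
\[
\D R_1=\partial_t\theta+\D(\theta_p w+R)+\D(\theta u+\rho w+\theta_o w)-L\theta,
\]
which differs from the pure-transport identity only by the single extra term $R_{\mathrm{diff}}:=-\mathcal{R}(L\theta)$; it is well defined because $L\theta$ has zero spatial mean ($\theta$ does and $L$ has constant coefficients), and Lemma~\ref{lemma:convex_integration_space_time} handles all the remaining terms word for word. Since $\mathcal{R}$ and $L$ act only in space, $\Supp_t R_{\mathrm{diff}}\subset\Supp_t\theta\subset\Supp_t R$, so \eqref{eq:support_of_R_1} is preserved. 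For the size, using the boundedness of $\mathcal{R}$ on $L^r$ for some $1<r<\infty$ (as in the $R_{\Acc}$ estimate) together with Sobolev embedding on $\TT^d$ and $p\ge k$,
\[
\|R_{\mathrm{diff}}\|_{L^1_{t,x}}\lesssim\|L\theta\|_{L^1_t L^r}\lesssim\|\theta\|_{L^1_t W^{k,r}}\lesssim\|\theta\|_{L^1_t C^k}\le\|\theta\|_{L^p_t C^p}\le C_R\lambda^{-\gamma},
\]
the last bound being Lemma~\ref{lemma:theta_estimate}. Hence $R_{\mathrm{diff}}$ fits within the same $C_R\lambda^{-\gamma}$ budget as all the errors in Section~\ref{subsec:estimate_R_1}, and the modified proposition follows by choosing $\lambda$ large.

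Granting the modified proposition, the proof of Theorem~\ref{thm:zero_viscosity_limit} repeats the proof of Theorem~\ref{thm:main_thm} line for line: fix $\widetilde\rho\in C^\infty(\TT^d\times\RR)$ with $\Supp_t\widetilde\rho\subset(0,T)$, zero spatial mean, and $\widetilde\rho\not\equiv 0$; start the iteration from $\rho_1=\widetilde\rho$, $u_1=0$, $R_1=\mathcal{R}(\partial_t\widetilde\rho-L\widetilde\rho)$, which solves the modified defect equation and has temporal support inside $(0,1)$ because $\partial_t\widetilde\rho-L\widetilde\rho$ is smooth with zero spatial mean and temporal support inside $\Supp_t\widetilde\rho$; iterate with $p_n=N2^n$ for a fixed $N\ge k$ and $\delta_n=\varepsilon 2^{-n}$; and extract the limit $(\rho,u)$, which lies in $L^p_t C^k$ for all $p<\infty$ and $k\in\NN$ and in $L^1_t W^{1,p}$ for all $p<\infty$, with $\Supp_t\rho\subset\Supp_t\widetilde\rho$. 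Passing to the limit in the weak formulation of \eqref{eq:the_a-d-equation} needs, beyond what is already in the proof of Theorem~\ref{thm:main_thm}, only that $L\rho_n\to L\rho$ in $L^1_{t,x}$, which follows from $\rho_n\to\rho$ in $L^1_t C^k$ (a consequence of convergence in $L^p_t C^p$ with $p\ge k$ on a unit time interval). Taking $\varepsilon$ small forces $\rho\not\equiv 0$, while $\rho\equiv 0$ is another weak solution of \eqref{eq:the_a-d-equation} with the same $u$ and zero initial data, so uniqueness fails in the asserted class.

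I do not anticipate a real obstacle: the mechanism is precisely the one highlighted in the introduction, namely that the density carries arbitrarily many spatial derivatives with arbitrarily good smallness (we may take $p$, and hence the regularity exponent in $C^p$, as large as we wish, and $\alpha$ as small as we wish), so a fixed-order operator $L$ produces only a subleading error. The one point deserving a routine check is that inserting $k$ further spatial derivatives into the density bound does not break the parameter inequalities of Lemma~\ref{lemma:parameters}; it does not, since the estimate actually used is merely $\|\theta\|_{L^1_t C^k}\le\|\theta\|_{L^p_t C^p}$ with $p\ge k$, which consumes no new room.
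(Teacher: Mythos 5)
Your proposal is correct and follows essentially the same route as the paper: the diffusion contributes a single extra error $-\mathcal{R}(L\theta)$, which is subleading because the density perturbation is controlled in $L^p_tC^p$ with $p\ge k$, so the inequalities of Lemma~\ref{lemma:parameters} need no modification. Your version is in fact slightly more complete than the paper's (you bound $L$ applied to all of $\theta$ rather than just $\theta_p$, and you spell out the base step $R_1=\mathcal{R}(\partial_t\widetilde\rho-L\widetilde\rho)$ and the passage to the limit), but the mechanism and estimates are the same.
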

\begin{proof}
In view of the scheme adopted in this paper, we only need to check that  Proposition \ref{prop:main_prop} holds for \eqref{eq:the_a-d-equation}. It suffices to check the linear term $L \rho$ results in a small error
\[
R_{L}:= \mathcal{R} L\sum_{1\leq k \leq d} \widetilde g_k R_k \bp_k.
\]
Indeed, by $L^1$ boundedness of $\mathcal{R} $,
\[
\begin{split}
\|R_{L}\|_{L^1_{t,x}} &\lesssim  C_R  \sum_{1\leq k \leq d} \|\widetilde{g}_k\|_{L^1} \|  \bp_k\|_{L^\infty_t W^{k ,1}},
\end{split}
\]
where $k \geq 1$ is the order of the linear operator $L$. Since we only need to prove the results for $p$ large, we can assume $ k \leq p$ so that, as in the proof of Lemma \ref{lemma:theta_estimate},
\[
\begin{split}
\|R_{L}\|_{L^1_{t,x}}&\lesssim C_R  \kappa^{\alpha - 1 } (\sigma \mu)^{p} \leq C_R \lambda^{-\gamma}.
\end{split}
\]
Hence there is no additional constraint coming from the diffusion.  
\end{proof}

\appendix
\section{Standard tools in convex integration}\label{sec:append_tools}

In this section, we recall several technical results that are now standard in convex integration.
%%%%%%%%%%%%%%%%%%%%%%%%%%%%%%%%%%%%%%%%%%
\subsection{Improved H\"older's inequality on \texorpdfstring{$\TT^d$}{Td}}
%%%%%%%%%%%%%%%%%%%%%%%%%%%%%%%%%%%%%%%%%%
We recall the following result due to Modena and Sz\'ekelyhidi~\cite[Lemma 2.1]{MR3884855}, which was inspired by \cite[Lemma 3.7]{MR3898708}.

\begin{lemma}\label{lemma:improved_Holder}
Let $d \geq 2$, $r \in [1,\infty]$, and $a,f :\TT^d \to \RR$ be smooth functions. Then for every $\sigma \in \NN$,
\begin{equation}
\Big|   \|a f(\sigma \cdot ) \|_{r }  - \|a \|_{r} \| f \|_{r } \Big|\lesssim \sigma^{-\frac{1}{r}} \| a\|_{C^1} \| f \|_{ r }.
\end{equation}
\end{lemma}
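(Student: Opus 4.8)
The statement to prove is the improved Hölder's inequality (Lemma~\ref{lemma:improved_Holder}): for smooth $a,f:\TT^d\to\RR$, $r\in[1,\infty]$, and $\sigma\in\NN$,
\[
\Big|\,\|a\,f(\sigma\cdot)\|_r - \|a\|_r\|f\|_r\,\Big| \lesssim \sigma^{-1/r}\|a\|_{C^1}\|f\|_r.
\]

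\medskip

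\textbf{Plan of proof.} The core idea is that rescaling $f\mapsto f(\sigma\cdot)$ tiles $\TT^d$ by $\sigma^d$ small cubes of side $\sigma^{-1}$ on each of which $a$ is nearly constant. I would partition $\TT^d = \bigcup_{j} Q_j$ into these $\sigma^{-d}$-many cubes $Q_j$ of side $\sigma^{-1}$, so that $\int_{Q_j}|f(\sigma x)|^r\,dx = \sigma^{-d}\int_{\TT^d}|f(y)|^r\,dy = \sigma^{-d}\|f\|_r^r$. Pick a reference point $x_j\in Q_j$ in each cube. The first step is the local replacement: compare $\int_{Q_j}|a(x)|^r|f(\sigma x)|^r\,dx$ with $|a(x_j)|^r\int_{Q_j}|f(\sigma x)|^r\,dx = |a(x_j)|^r\sigma^{-d}\|f\|_r^r$. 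Since $\big||a(x)|^r-|a(x_j)|^r\big|\le r\|a\|_{C^0}^{r-1}\|a\|_{C^1}|x-x_j|\lesssim \|a\|_{C^1}^{r}\sigma^{-1}$ on $Q_j$ (using $|x-x_j|\lesssim\sigma^{-1}$ and $\|a\|_{C^0}\le\|a\|_{C^1}$), summing over $j$ gives
\[
\Big|\,\|a\,f(\sigma\cdot)\|_r^r - \sum_j |a(x_j)|^r\sigma^{-d}\|f\|_r^r\,\Big| \lesssim \sigma^{-1}\|a\|_{C^1}^r\|f\|_r^r.
\]
The second step identifies $\sum_j |a(x_j)|^r\sigma^{-d}$ as a Riemann sum for $\int_{\TT^d}|a|^r = \|a\|_r^r$; the same $C^1$-modulus-of-continuity bound shows $\big|\sum_j|a(x_j)|^r\sigma^{-d} - \|a\|_r^r\big|\lesssim\sigma^{-1}\|a\|_{C^1}^r$. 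Combining, $\big|\,\|a\,f(\sigma\cdot)\|_r^r - \|a\|_r^r\|f\|_r^r\,\big|\lesssim\sigma^{-1}\|a\|_{C^1}^r\|f\|_r^r$.

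\medskip

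\textbf{From the $r$-th powers to the norms.} To pass from the estimate on $r$-th powers to the claimed estimate on norms, I would use the elementary inequality $|A^{1/r}-B^{1/r}|\le |A-B|^{1/r}$ valid for $A,B\ge 0$ and $r\ge 1$. Applying it with $A=\|a\,f(\sigma\cdot)\|_r^r$ and $B=\|a\|_r^r\|f\|_r^r$ yields
\[
\Big|\,\|a\,f(\sigma\cdot)\|_r - \|a\|_r\|f\|_r\,\Big| \lesssim \big(\sigma^{-1}\|a\|_{C^1}^r\|f\|_r^r\big)^{1/r} = \sigma^{-1/r}\|a\|_{C^1}\|f\|_r,
\]
which is exactly the claim. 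The endpoint case $r=\infty$ should be treated separately (or noted to be trivial): there $\|a\,f(\sigma\cdot)\|_\infty = \|a\|_\infty\|f\|_\infty$ up to the observation that on the fine grid the sup of $|a|$ and the sup of $|f(\sigma\cdot)|$ are essentially attained; in fact one only needs $\sigma^{-1/r}\to 1$ and the bound is vacuous/immediate, so I would simply remark the case $r=\infty$ follows by letting $r\to\infty$ or by a direct trivial argument.

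\medskip

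\textbf{Expected main obstacle.} The proof is essentially bookkeeping, and the only genuinely delicate point is controlling the modulus of continuity of $x\mapsto |a(x)|^r$ uniformly when $r$ is large: the naive bound $r\|a\|_{C^0}^{r-1}\|a\|_{C^1}$ carries a factor $r$ and a power $\|a\|_{C^0}^{r-1}$, which is why the final bound is stated with $\|a\|_{C^1}^r$ hidden inside the implicit constant — one must be careful that the implied constant may depend on $r$, which is harmless here since $r$ is fixed. A cleaner route that avoids differentiating $|a|^r$ is to first handle $a\ge 0$ (replacing $|a|^r$ estimates by the Lipschitz bound on $a$ itself composed with $t\mapsto t^r$ on the bounded range $[0,\|a\|_\infty]$) and then reduce the general case to $|a|$, noting $\|\,|a|\,\|_{C^1}\le\|a\|_{C^1}$ away from the zero set and a standard approximation argument (or simply citing \cite[Lemma 2.1]{MR3884855}) at the zero set. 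In any case, since $r$ is a fixed parameter throughout the convex integration scheme, none of these $r$-dependent constants cause trouble downstream.
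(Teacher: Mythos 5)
Your argument is correct, and it is essentially the standard proof of this lemma, which the paper does not reprove but cites from Modena--Sz\'ekelyhidi \cite[Lemma 2.1]{MR3884855} (itself following \cite[Lemma 3.7]{MR3898708}): tile $\TT^d$ by $\sigma^d$ cubes of side $\sigma^{-1}$, use the exact periodicity identity $\int_{Q_j}|f(\sigma x)|^r\,dx=\sigma^{-d}\|f\|_r^r$, control the oscillation of $|a|^r$ on each cube, and pass from $r$-th powers to norms via $|A^{1/r}-B^{1/r}|\le|A-B|^{1/r}$. Your closing worry about the zero set of $a$ is unnecessary: the bound $\big||a(x)|^r-|a(y)|^r\big|\le r\|a\|_{C^0}^{r-1}\,\big||a(x)|-|a(y)|\big|\le r\|a\|_{C^0}^{r-1}\|a\|_{C^1}|x-y|$ uses only the mean value theorem for $t\mapsto t^r$ on $[0,\|a\|_{C^0}]$ together with the Lipschitz continuity of $a$, and never requires differentiating $|a|$.
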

Note that the error term on the right-hand side can be made arbitrarily small by increasing the oscillation factor $ \sigma$.

%%%%%%%%%%%%%%%%%%%%%%%%%%%%%%%%%%%%%%%%%%
\subsection{Antidivergence operators \texorpdfstring{$\mathcal{R}$}{R} and \texorpdfstring{$\mathcal{B}$}{B}}
%%%%%%%%%%%%%%%%%%%%%%%%%%%%%%%%%%%%%%%%%%
We will use the standard antidivergence operator $\Delta^{-1} \nabla$   on $\TT^d$, which will be denoted by $\mathcal{R}$.

It is well known that for any $f \in C^\infty  (\TT^d)$ there exists a  unique $ u \in C^\infty_0 (\TT^d)$ such that 
$$
\Delta u = f -\fint f.
$$ 
For any smooth scalar function $f \in C^\infty(\TT^d) $, the standard anti-divergence operator $ \mathcal{R} : C^\infty(\TT^d)  \to C^\infty_0(\TT^d, \RR^d ) $ can be defined as
$$
\mathcal{R} f:=  \Delta^{-1} \nabla f ,
$$
which satisfies
$$
\D ( \mathcal{R}  f )  = f  -\fint_{\TT^d}f\quad \text{for all $ f \in C^\infty(\TT^d)$}.
$$
It well-known (see for instance \cite[Lemma 2.2]{MR3884855}) that $ \mathcal{R}$ is bounded on Sobolev spaces $W^{k,p}(\TT^d)$ for all $k\in \NN$. And $ \mathcal{R}   \D$ is a Calderón–Zygmund operator,
$$
\| \mathcal{R} ( \D   u ) \|_r  \lesssim \| u \|_r   \quad \text{for all $ u \in C^\infty(\TT^d,\RR^d)$ and $1<  r <\infty$}.
$$
Recall the following useful fact about $\mathcal{R}$.
$$
\mathcal{R}f(\sigma \cdot ) = \sigma^{-1} \mathcal{R}f  \quad \text{for any $f \in C^\infty_0(\TT^d)$ and any positive $\sigma \in \NN$.}
$$

We will also use its bilinear counterpart $ \mathcal{B}: C^\infty(\TT^d) \times C^\infty(\TT^d) \to C^\infty (\TT^d,\RR^d) $ defined by
$$
\mathcal{B}(a,f) : = a \mathcal{R} f  - \mathcal{R}( \nabla a  \cdot \mathcal{R} f).
$$
It is easy to see that $\mathcal{B}$ is a left-inverse of the divergence,
\begin{equation}\label{eq:bilinear_B_identity}
\D (\mathcal{B}(a,f)  ) = af -\fint_{\TT^d} af \, dx \quad \text{provided that $f \in C^\infty_0(\TT^d)$, }
\end{equation}
which can be proved easily using integration by parts.
The following estimate is a direct consequence of the boundedness of $ \mathcal{R}$ on  Sobolev spaces $W^{k,p}(\TT^d)$.

\begin{lemma}\label{lemma:cheapbound_B}
Let $d \geq 2$ and $1\leq r  \leq \infty$. Then for any $a,f \in C^\infty(\TT^d)$
\begin{align*}
\| \mathcal{B}(a,f)\|_r \lesssim \| a\|_{C^1} \|\mathcal{R}  f\|_r .
\end{align*}
\end{lemma}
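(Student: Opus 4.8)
The plan is to expand the definition $\mathcal{B}(a,f) = a\,\mathcal{R}f - \mathcal{R}\big(\nabla a \cdot \mathcal{R}f\big)$ and bound the two resulting terms separately by the triangle inequality. For the first term, H\"older's inequality gives $\|a\,\mathcal{R}f\|_r \le \|a\|_{L^\infty}\|\mathcal{R}f\|_r \le \|a\|_{C^1}\|\mathcal{R}f\|_r$. For the second term, I would first use the $L^r\to L^r$ boundedness of $\mathcal{R}$ to reduce to $\|\mathcal{R}(\nabla a \cdot \mathcal{R}f)\|_r \lesssim \|\nabla a \cdot \mathcal{R}f\|_r$, and then apply H\"older again: $\|\nabla a \cdot \mathcal{R}f\|_r \le \|\nabla a\|_{L^\infty}\|\mathcal{R}f\|_r \le \|a\|_{C^1}\|\mathcal{R}f\|_r$. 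Summing the two estimates concludes the proof.

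The only step that is not a one-line application of H\"older's inequality is the claim that $\mathcal{R}$ is bounded on $L^r(\TT^d)$ for the \emph{full} range $1\le r\le\infty$, including the endpoints $r=1$ and $r=\infty$, which are the ones genuinely used later (the $R_{\text{osc,x}}$ and $R_{\Acc}$ estimates are carried out with $r=1$ in space). Here one must \emph{not} invoke the Calder\'on--Zygmund bound for $\mathcal{R}\,\D$, which only holds for $1<r<\infty$; instead one exploits that $\mathcal{R}=\Delta^{-1}\nabla$ is one derivative smoother than a singular integral operator. Concretely, each component of $\mathcal{R}f$ is the convolution on $\TT^d$ of $f$ with a partial derivative of the periodic Green's function $G$ of $-\Delta$; near the origin $\nabla G(x)=O(|x|^{-(d-1)})$ (for $d=2$ this reads $O(|x|^{-1})$), and since $|x|^{-(d-1)}\in L^1_{\mathrm{loc}}(\RR^d)$ while $\nabla G$ is smooth away from the lattice $\ZZ^d$, one has $\nabla G\in L^1(\TT^d)$. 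Young's convolution inequality then yields $\|\mathcal{R}f\|_r\le\|\nabla G\|_{L^1(\TT^d)}\|f\|_r$ for every $1\le r\le\infty$; equivalently, one simply invokes \cite[Lemma 2.2]{MR3884855}.

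With this $L^r$-continuity of $\mathcal{R}$ in hand, the lemma follows immediately, and I do not expect any further obstacle: the argument is essentially two applications of H\"older's inequality glued together by the boundedness of $\mathcal{R}$, with the endpoint mapping properties of $\mathcal{R}$ being the only substantive ingredient.
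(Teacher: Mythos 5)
Your proof is correct and follows the same route the paper intends: expand $\mathcal{B}(a,f)=a\,\mathcal{R}f-\mathcal{R}(\nabla a\cdot\mathcal{R}f)$, apply H\"older to each term, and use the $L^r$-boundedness of $\mathcal{R}$ (which, as you note, holds for all $1\le r\le\infty$ since $\mathcal{R}$ gains a derivative --- this is exactly the content of the cited Lemma 2.2 of Modena--Sz\'ekelyhidi, and your Young's-inequality justification via the integrable kernel $\nabla G$ is a fine way to see it). No gaps; your endpoint discussion merely makes explicit a detail the paper leaves to the reference.
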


\bibliographystyle{alpha}
\bibliography{nonuniqueness_transport}

\end{document}